\documentclass[11pt]{amsart}

\usepackage[a4paper,margin=1in]{geometry}
\usepackage{amsmath,amssymb,amsthm,mathrsfs,mathtools,physics}
\usepackage{enumerate}
\usepackage{hyperref}
\usepackage{tikz, amsfonts, color,  extarrows, cite, subfiles, csquotes, commath, eucal,mathrsfs, bbm }

\numberwithin{equation}{section}

\newtheorem{theorem}{Theorem}[section]
\newtheorem{proposition}[theorem]{Proposition}
\newtheorem{lemma}[theorem]{Lemma}

\theoremstyle{definition}
\newtheorem{definition}[theorem]{Definition}
\newtheorem{example}[theorem]{Example}
\newtheorem{remark}[theorem]{Remark}

\def\bc{\begin{center}}
\def\ec{\end{center}}

\def\beq{\begin{equation}}
\def\eeq{\end{equation}}
\def\beqarray{\begin{eqnarray*}}
\def\eeqarray{\end{eqnarray*}}
\def\<{\leftangle}
\def\>{\rightangle}
\def\({\left(}
\def\){\right)}

\def\<{\langle}
\def\>{\rangle}

\def\a{\alpha}

\def\O{\Omega}

\def\z{\zeta}

\def\w.r.t.{with respect to}

\def\N{{\mathbb{N}}}
\def\Z{{\mathbb{Z}}}
\def\D{{\mathbb{D}}}

\def\C{{\mathbb{C}}}

\def\bq{\begin{quote}}
\def\eq{\end{quote}}

\def\bit{\begin{itemize}}
\def\eit{\end{itemize}}

\def\ben{\begin{enumerate}}
\def\een{\end{enumerate}}

\hypersetup{
	colorlinks   = true,
	citecolor    = magenta}

\begin{document}

\title{Stability of the Monomial Basis Kernel of Reinhardt Domains}
\keywords{Bergman space, Monomial polyhedra, Monomial Basis Kernel,  Threshold exponents}
\subjclass{Primary: 32A36 ; Secondary : 32A25, 46B15}
\author{Shreedhar Bhat, Sahil Gehlawat}

\address{SB: Tata Institute of Fundamental Research
Centre for Applicable Mathematics,
Bengaluru 560065, India.}
\email{shreedhar24@tifrbng.res.in}

\address{SG: Department of Mathematics, Indian Institute of Technology Jodhpur, NH 65, Jodhpur, Rajasthan-342030, India}
\email{sahilg@iitj.ac.in}


\begin{abstract}
On a pseudoconvex Reinhardt domain $\Omega\subset\mathbb C^n$ the $p$-Bergman space $A^p(\Omega)$ admits a canonical basis of monomials indexed by a subset $S_p(\Omega)\subset\mathbb Z^n$. The corresponding $p$-Monomial Basis Kernel (or $p$-MBK) is defined by a series involving these monomials and their $L^p$–norms, and may be seen as an analogue of the Bergman kernel adapted to $L^p$–geometry. This article records stability properties of the $p$-MBK and of the index set $S_p(\Omega)$ with respect to the parameter $p$. First, under mild hypotheses, the $p$-MBK depends continuously on $p\in[1,\infty)$, and a Ramadanov-type theorem holds for $p$-MBK for an increasing sequence of pseudoconvex Reinhardt domains $\{\Omega_{j}\}_{j \ge 1}$. Second, for certain special classes of monomial polyhedra, we explicitly compute $S_p(\Omega)$ and the associated Threshold exponents. Finally, these explicit models are used to illustrate structural properties of $S_p(\Omega)$ under finite unions, intersections, and products.
\end{abstract}

\maketitle

\section{Introduction}
For a domain $\Omega \subset \mathbb{C}^n$, the Bergman space corresponding to $\Omega$ is the space of $L^2$-integrable holomorphic functions on $\Omega$, that is
\[
A^{2}(\Omega) = \left\{f \in \mathcal{O}(\Omega) \mid \int_{\Omega} \vert f\vert^2 dV < +\infty\right\}.
\]
The study of kernel functions is central to complex analysis and among these, the Bergman kernel plays a distinguished role. The Bergman kernel $K_{\Omega}(\cdot,\cdot)$ of a domain $\Omega \subset \mathbb{C}^n$ is the reproducing kernel of the Bergman space and it encodes the $L^2$–geometry of holomorphic functions. This gives rise to the Bergman projection, which is the orthogonal projection $P : L^2(\Omega) \to A^2(\Omega)$ given by
\[
P(f)(z) = \int_{\Omega}f(w) K_{\Omega}(z,w) dV(w),
\]
where $dV$ is the Lebesgue measure.


But sometimes the $L^2$–framework is too rigid to capture fine $L^p$–phenomena for $p\neq 2$, especially on non-smooth domains (for example see, \cite[Theorem 1.1]{Deng2020LinearVariations}). This motivates an extensive study of the $p$-Bergman spaces,
\[
A^{p}(\Omega) = \left\{f \in \mathcal{O}(\Omega) \mid \int_{\Omega} \vert f\vert^p dV < \infty\right\},
\]
and the associated Bergman projection on $L^p$–spaces for $1 \le p < \infty$, with a particular focus on their duality and regularity properties, as well as on their dependence on the geometry of the domain (see \cite{zeytuncu2020survey}, \cite{Hedenmalm2002TheDomains}, \cite{chakrabarti2016Lp}, \cite{chen2020lp},  \cite{chakrabarti2019duality},\cite{bhat2024duality}, \cite{Chen2021OnTheory}).

For a Reinhardt domain $\Omega \subset \mathbb{C}^n$, it is well known that the Bergman kernel $K_{\Omega}$ can be described as a sum over a suitable set of monomials, as they form an orthogonal basis for the corresponding Bergman space $A^2(\Omega)$ (cf \cite{Jarnicki2017FirstDomains}). With this as motivation, Chakrabarti and Edholm in \cite{chakrabarti2024projections} introduced the notion of the $p$-Monomial Basis Kernel $K_{p,\Omega}$ ($p$-MBK for short), for the space $A^p(\Omega)$ for a pseudoconvex Reinhardt domain $\Omega$. To recall, the $p$-Monomial Basis Kernel is built from monomials in the space $A^p(\Omega)$, weighted by their $L^p$–norms (see Definition \ref{D:p-MBK}). For $p=2$, this coincides with the classical Bergman kernel $K_{\Omega}$. Using the $p$-Monomial Basis Kernel, they \emph{formally} defined the monomial basis projection $M_{p} : L^{p}(\Omega) \to A^{p}(\Omega)$.
{For certain classes of domains, the monomial basis projection exhibits superior mapping properties on $L^p$ spaces compared to the Bergman projection. For example, when $U$ is a monomial polyhedron, both the $p$-Monomial Basis Projection ($p$-MBP) and its associated absolute-value operator are bounded on $L^p(U)$ for all relevant exponents $p$. (See \cite[\S 8]{chakrabarti2024projections}.) By contrast, for such domains the Bergman projection fails to be $L^p$-bounded once $p$ leaves the critical interval determined by the threshold exponents associated with $U$. The persistence of $L^p$-boundedness for the $p$-MBP and its absolute operator beyond this critical range highlights a genuinely stronger regularity theory than that available for the Bergman projection. Consequently, the monomial basis projection emerges as a natural and intrinsically interesting object for further investigation in the study of function spaces on monomial polyhedra.}\\

In \cite[page 41]{chakrabarti2024projections}, the authors noted that the study of the $p$-allowable indices set $S_{p}(\Omega)$ (Definition \ref{D:p-allowable indices}) is central in the $L^p$-function theory of a Reinhardt domain $\Omega \subset \mathbb{C}^n$. These are the set of indices for which the corresponding monomials forms a basis of the $p$-Bergman space $A^{p}(\Omega)$. One of the important questions in the study of the set $S_{p}(\Omega)$ is the dependence on $p$. For a Reinhardt domain, we say $p \in (1, \infty)$ is a \textit{Threshold exponent} if for any $\epsilon >0$, there exists $q \in (p-\epsilon, p+ \epsilon)$ such that $S_{p}(\Omega) \neq S_{q}(\Omega)$ (see Definition \ref{D:Threshold}). In a sense, Threshold exponents are the points of discontinuity of the set of $p$-allowable indices $S_{p}(\Omega)$. The Threshold exponent are important in understanding the $L^{p}$-irregularity of Bergman projection. For instance, in \cite{bender2022regularity}, it was proved that for a monomial polyhedron $\Omega$, the Bergman projection is bounded in $L^{p}$ if and only if $p \in (\tilde{q}, \tilde{p})$, where $\tilde{p}$ is the smallest Threshold exponent bigger than 2, and $\tilde{q}$ is the conjugate of $\tilde{p}$.

The purpose of this article is two-fold. First, we study the stability of the $p$-Monomial Basis Kernel for a pseudoconvex Reinhardt domain. More specifically, we prove that the $p$-Monomial Basis Kernel depends continuously on $p$ (Theorem \ref{thm:continuity-p}). We also prove a Ramadanov type result for $p$-Monomial Basis Kernel corresponding to an increasing sequence of pseudoconvex Reinhardt domains. 



In the second part, we turn our attention to monomial polyhedra, a class of Reinhardt domains defined by finitely many inequalities involving monomials. Recall, the monomial polyhedra corresponding to a matrix $A=(a_{ij})\in GL_n(\mathbb Z)$ is defined by
\[
\Omega_A:=\left\{z\in\mathbb C^n : \bigl|\prod_{i=1}^n z_i^{a_{ij}}\bigr|<1,\ \text{for }j=1,\dots,n\right\}.
\]
We provide explicit formulae for the $p$–allowable index sets and the corresponding Threshold exponents for some special families of monomial polyhedra. This includes Hartogs triangles $H_{\gamma}$, the two-dimensional monomial polyhedra $\Omega_{A}$ associated with the $2\times 2$ matrix
\[
A=\begin{pmatrix} a & -b\\ -c & d\end{pmatrix},
\]
where $a,b,c,d >0$, and det$(A) > 0$. We also consider ``type 1'' and ``type 2'' monomial polyhedra $\Omega_{B}, \Omega_{C} \subset \mathbb{C}^n$ associated to the $n \times n$ matrices 
\[
B=\begin{pmatrix}
k_1 & -k_2 & \cdots & -k_n\\
0 & 1 & & 0\\
\vdots & & \ddots & \vdots\\
0 & 0 & \cdots & 1
\end{pmatrix} \ \quad  \& \quad
 \ C=\begin{pmatrix}
k_1 & -k_2 & 0 & \cdots & 0\\
0 & k_2 & -k_3 & \cdots & 0\\
\vdots & & \ddots & \ddots & \vdots\\
0 & \cdots & 0 & k_{n-1} & -k_n\\
0 & \cdots & 0 & 0 & k_n
\end{pmatrix},
\] 
where $k_{i} \in \mathbb{N}$, for all $1 \le i \le n$, and gcd$(k_{1}, k_{2}, \ldots ,k_{n}) = 1$. These domains have already been considered before, especially with regard to the calculation of the Bergman kernel and the question of the boundedness of the Bergman projection (see \cite{bender2022regularity}, \cite{Chakrabarti2020elementary}, \cite{Chakrabarti2024kernel}, \cite{chen2026lower}, \cite{Zha21Hartogs} and \cite{Zha21Reinhardt}). For Hartogs triangle $H_{\gamma}$ with irrational exponent $\gamma$, we show that every $p\in[1,\infty)$ is a Threshold exponent.

Finally, we collect several structural properties of $S_p(\Omega)$ under finite unions, intersections, and products of Reinhardt domains. The case of finite unions behaves especially well, with $S_p(\Omega_1\cup\Omega_2)=S_p(\Omega_1)\cap S_p(\Omega_2)$, where $\Omega_{1}, \Omega_{2} \subset \mathbb{C}^n$ are Reinhardt domains. Meanwhile, intersections and infinite unions display more subtle behaviour. We show that $S_p(\Omega)$ is determined by the geometry of the domain near the origin and at infinity (see Theorem \ref{thm:localization}).

The paper is organized as follows. In Section~\ref{sec:prelim}, we review basic facts about Reinhardt domains, $p$-Bergman spaces, and $p$–allowable index sets, and we introduce the $p$-monomial basis kernel. Section~\ref{sec:continuity} is devoted to continuity of the $p$-MBK with respect to $p$ and to a local uniform domination result for the summands. In Section~\ref{sec:polyhedra}, we carry out explicit computations of $S_p(\Omega)$ and Threshold exponents for special classes of monomial polyhedra, that is, $\Omega_{A}, \Omega_{B}$, and $ \Omega_{C}$. Section~\ref{sec:Sp-properties} then records structural properties of $S_p(\Omega)$ under set-theoretic operations and proves a Ramadanov-type convergence theorem.

\section{Preliminaries on Reinhardt domains and \texorpdfstring{$p$}{p}-Bergman spaces}
\label{sec:prelim}

\noindent In this section, we recall the basic notions used throughout the paper and fix notations.

\subsection{Reinhardt domains and their shadows}

\begin{definition}
A domain (i.e. a non-empty open connected set) $\Omega\subset\mathbb C^n$ is called a \emph{Reinhardt domain} if
\[
(z_1,\dots,z_n)\in\Omega \Longrightarrow (e^{i\tau_1}z_1,\dots,e^{i\tau_n}z_n)\in\Omega
\]
for all $(\tau_1,\dots,\tau_n)\in\mathbb R^n$.
\end{definition}

Associated to a Reinhardt domain $\Omega$ is its radial image in $\mathbb R^n$.

\begin{definition}
The \emph{Reinhardt shadow} of a Reinhardt domain $\Omega\subset\mathbb C^n$ is the set
\[
|\Omega|:=\{(|z_1|,\dots,|z_n|)\in\mathbb R^n: (z_1,\dots,z_n)\in\Omega\}.
\]
We also write
\[
|\Omega|\times\mathbb T^n:=\{(r_1e^{i\tau_1},\dots,r_ne^{i\tau_n})\in\mathbb C^n : (r_1,\dots,r_n)\in|\Omega|,\ \tau_j\in\mathbb R, j=1,\cdots,n\}
\]
where $\mathbb T^n$ is the $n$-torus.
\end{definition}

For $q\ge 1$ we will also need a mixed geometric mean of two copies of $|\Omega|$.

\begin{definition}
For a subset $E\subset[0,\infty)^n$ and $q\ge 1$ we define
\[
E^{1/q}\cdot E^{(q-1)/q}:=\{(s_1^{1/q}\cdot t_1^{(q-1)/q},\dots,s_n^{1/q}\cdot t_n^{(q-1)/q}) : s,t\in E\}.
\]
\end{definition}

\noindent Recall that when $\Omega$ is pseudoconvex and Reinhardt, its shadow is log-convex, and this description simplifies.

\begin{remark} 
If $\Omega\subset\mathbb C^n$ is a pseudoconvex Reinhardt domain, then $|\Omega|$ is log-convex. In particular, for every $q\ge 1$,
\[
|\Omega|^{1/q}\cdot|\Omega|^{(q-1)/q}\times\mathbb T^n=\Omega.
\]
\end{remark}

\begin{definition}\cite[Section 3.1]{chakrabarti2024projections}
     For $p\ge 1$, a ``twisting" map $\chi_p$ is defined on $\mathbb{C}^n$ by
\[
\chi_p(\z):=\bigl(\z_1\abs{\z_1}^{(p-2)},\dots,\z_n\abs{\z_n}^{(p-2)}\bigr).
    \] 
 The map $\chi_p$ is a homeomorphism onto itself, and its inverse is given by $\chi_q$ where $1/p+1/q=1$. Also, when $ p = 2$, the twisting map $\chi_2$ reduces to an identity map.
\end{definition}

\subsection{\texorpdfstring{$p$}{p}-Bergman spaces and \texorpdfstring{$p$}{p}–allowable indices}

Let $\Omega\subset\mathbb C^n$ be a domain and $1\le p<\infty$. The $p$-Bergman space is the closed subspace of $L^p(\Omega)$ consisting of holomorphic functions.


\noindent When $\Omega$ is a Reinhardt domain, monomials are natural building blocks.

\begin{definition}\label{D:p-allowable indices}
For $\alpha=(\alpha_1,\dots,\alpha_n)\in\mathbb Z^n$ and $z=(z_1,\dots,z_n)\in\mathbb C^n$ we set
\[
e_\alpha(z):=z_1^{\alpha_1}\cdots z_n^{\alpha_n},
\]
with the convention that negative exponents are interpreted as Laurent monomials whenever they are locally integrable. The \emph{$p$–allowable index set} of $\Omega$ is
\[
S_p(\Omega):=\{\alpha\in\mathbb Z^n : e_\alpha\in A^p(\Omega)\}.
\]
\end{definition}

\begin{remark}
By Hölder's inequality, these index sets form an inclusion chain in $p$. In particular, if $1\le p_1\leq p_2\leq p_3<\infty$ and $\Omega\subset\mathbb C^n$ is a domain, then
\[
S_{p_2}(\Omega)\supseteq S_{p_1}(\Omega)\cap S_{p_3}(\Omega).
\]
\end{remark}

The dependence of $S_p(\Omega)$ on $p$ is typically piecewise constant, with jumps at certain critical values.

\begin{definition}\label{D:Threshold}
A number $p\in[1,\infty)$ is called a \emph{Threshold exponent} for a domain $\Omega$ if for every $\varepsilon>0$ there exists $q\in(p-\varepsilon,p+\varepsilon)$ such that $S_q(\Omega)\neq S_p(\Omega)$.
\end{definition}

Threshold exponents record the values of $p$ at which the integrability of some monomial $e_\alpha$ changes, and carry important geometric information about the domain. In particular, they are closely related to indices of duality, regularity, and integrability for $A^p(\Omega)$. See, for example, \cite[Theorem 4.2, Proposition 4.8]{chakrabarti2019duality} or \cite{bhat2024duality}. Also, for monomial polyhedra they can be computed explicitly in many cases. 

\subsection{The \texorpdfstring{$p$}{p}-monomial basis kernel}

Let $\Omega\subset\mathbb C^n$ be a Reinhardt domain and $1\le p<\infty$. We now define the $p$-monomial basis kernel ($p$-MBK) associated to the $p$-allowable monomials.

\begin{definition}\label{D:p-MBK} \cite[Section 3]{chakrabarti2024projections}
For $z,w\in\Omega$ and $\alpha\in\mathbb Z^n$ set
\[
E_{\alpha,p}^\Omega(z,w):=
\begin{cases}
\dfrac{e_\alpha(z)\overline{e_\alpha(w)}|e_\alpha(w)|^{p-2}}{\|e_\alpha\|_{p,\Omega}^p\,}, & \alpha\in S_p(\Omega),\\[5pt]
0, & \alpha\notin S_p(\Omega),
\end{cases}
\]
where and $\|\cdot\|_{p,\Omega}$ the $L^p$–norm. The \emph{$p$-monomial basis kernel} on $\Omega$ is
\beq\label{def:p-MBK}
K_p^\Omega(z,w):=\sum_{\alpha\in\mathbb Z^n}E_{\alpha,p}^\Omega(z,w)=\sum_{\alpha\in S_p(\Omega)}\dfrac{e_\alpha(z)\overline{e_\alpha(w)}|e_\alpha(w)|^{p-2}}{\|e_\alpha\|_{p,\Omega}^p\,}.
\eeq
When the domain is clear from the context, we write $\|\cdot\|_{p}$ , $E_{\alpha,p}$ and $K_p$ instead of  $\|\cdot\|_{p,\Omega}$, $E_{\alpha,p}^\Omega$ and $K_p^\Omega$.
\end{definition}

{\cite{chakrabarti2024projections} showed that when $\O$ is a pseudoconvex Reinhardt domain, the summation in \ref{def:p-MBK} converges uniformly on compact subsets of $\Omega\times \Omega$}. Formally, when $p=2$ and the monomials $\{e_\alpha:\alpha\in S_2(\Omega)\}$ form an orthogonal basis of $A^2(\Omega)$, the kernel $K_2$ coincides with the usual Bergman kernel. 

\section{Continuity of the \texorpdfstring{$p$}{p}-monomial basis kernel in \texorpdfstring{$p$}{p}}
\label{sec:continuity}

In this section, we investigate the stability of the $p$-MBK with respect to the parameter $p$, and establish that on pseudoconvex Reinhardt domains the $p$-MBK depends continuously on the exponent $p$. The basic idea is to show pointwise convergence of each summand $E_{\alpha,p}$ and to dominate the family by an $\ell^1$–summable majorant independent of $p$ in a compact interval.





First, we recall this result from \cite[Lemma 1.6.2]{Jarnicki2017FirstDomains}. 
\begin{remark}\label{rem:CompactSet} Assume that $K$ is a compact Reinhardt subset of $\Omega$. Then there exists a compact set $K'\subset \Omega$ and $\theta \in(0,1)$  such that any $\alpha \in \mathbb{Z}^n$ that satisfies the condition 
\[ \max_{z \in K} |z^\alpha| \le \theta^{|\alpha|} \max_{z \in K'} |z^\alpha|, \]
where the norm $|\alpha|$ is defined as the sum of its absolute components, $|\alpha_1| + \dots + |\alpha_n|$.
\end{remark}

We show that for each fixed monomial index $\alpha$, the corresponding summand $E_{\alpha,p}$ varies continuously with $p$ along monotone sequences, provided the index remains $p$–allowable.

\begin{lemma}\label{lem:pointwise}
Let $\Omega\subset\mathbb C^n$ be a Reinhardt domain, $\alpha\in\mathbb Z^n$, and $p\in[1,\infty)$. Let $\{p_k\}_{k=1}^\infty$ be a monotone sequence (either increasing or decreasing) with $p_k\to p$ and assume that $\alpha\in S_{p_k}(\Omega)$ for all $k$. Then
\[
E_{\alpha,p_k}(z,w)\longrightarrow E_{\alpha,p}(z,w)
\]
for every $z,w\in\Omega$.
\end{lemma}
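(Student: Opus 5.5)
The plan is to treat the numerator and the denominator of $E_{\alpha,p_k}$ separately. The denominator carries the real content: I will show that $\|e_\alpha\|_{p_k}^{p_k}\to\int_\Omega |e_\alpha|^p\,dV$ in $[0,\infty]$. This also covers the possibility that $\alpha\notin S_p(\Omega)$, which can happen at a Threshold exponent; then the limit is $+\infty$.

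\emph{Numerator.} Fix $z,w\in\Omega$. The factor $e_\alpha(z)$ does not depend on $p$.
\begin{itemize}
\item If $e_\alpha(w)\neq 0$, then $\overline{e_\alpha(w)}\,|e_\alpha(w)|^{p_k-2}=\overline{e_\alpha(w)}\exp\bigl((p_k-2)\log|e_\alpha(w)|\bigr)$, which is continuous in the exponent and so converges to $\overline{e_\alpha(w)}\,|e_\alpha(w)|^{p-2}$.
\item If $e_\alpha(w)=0$ (possible only when $\alpha$ has a nonnegative entry and $w$ lies on a coordinate hyperplane), the quantity $\overline{e_\alpha(w)}|e_\alpha(w)|^{p-2}$ is understood, as for the twisting map $\chi_p$, as having modulus $|e_\alpha(w)|^{p-1}=0$. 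So it equals $0$ for every exponent $\ge 1$ under this convention, and convergence is trivial.
\end{itemize}

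\emph{Denominator.} Since $e_\alpha$ is a nonzero holomorphic (Laurent) monomial on $\Omega$, each $\int_\Omega|e_\alpha|^{p_k}\,dV$ is finite and strictly positive. Split $\Omega=\Omega_{<}\cup\Omega_{\ge}$, where $\Omega_{<}=\{|e_\alpha|<1\}$ and $\Omega_{\ge}=\{|e_\alpha|\ge1\}$. Pointwise, $|e_\alpha|^{p_k}\to|e_\alpha|^p$ on $\Omega$.
\begin{itemize}
\item \emph{Increasing case $p_k\uparrow p$.}
  \begin{itemize}
  \item On $\Omega_{\ge}$ the functions $|e_\alpha|^{p_k}$ increase in $k$, so the monotone convergence theorem gives convergence of the integrals in $[0,\infty]$.
  \item On $\Omega_{<}$ they satisfy $|e_\alpha|^{p_k}\le|e_\alpha|^{p_1}$, which is integrable because $\alpha\in S_{p_1}(\Omega)$. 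Dominated convergence gives a finite limit.
  \end{itemize}
\item \emph{Decreasing case $p_k\downarrow p$.} The roles swap.
  \begin{itemize}
  \item On $\Omega_{<}$ the functions increase in $k$; monotone convergence applies.
  \item On $\Omega_{\ge}$ they are dominated by $|e_\alpha|^{p_1}\in L^1$; dominated convergence applies.
  \end{itemize}
\end{itemize}
In both cases $\|e_\alpha\|_{p_k}^{p_k}\to\int_\Omega|e_\alpha|^p\,dV\in(0,\infty]$.

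\emph{Conclusion.} If $\alpha\in S_p(\Omega)$, the limit is finite and positive. The quotient then converges to $E_{\alpha,p}(z,w)$. If $\alpha\notin S_p(\Omega)$, the only way this can happen is that $\int_\Omega|e_\alpha|^p=\infty$, since holomorphy of $e_\alpha$ on $\Omega$ is independent of $p$. The denominators then tend to $+\infty$ while the numerators stay bounded, being convergent. Hence $E_{\alpha,p_k}(z,w)\to0=E_{\alpha,p}(z,w)$.

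The main obstacle is the denominator step. Monotonicity of $(p_k)$ is exactly what makes the splitting at $|e_\alpha|=1$ work: one piece is monotone and the other is dominated by the integrable function $|e_\alpha|^{p_1}$. For a non-monotone sequence no single integrable majorant need exist, which is why the lemma is stated only for monotone sequences.
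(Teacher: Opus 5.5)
Your argument is correct, but it is organized differently from the paper's. The paper splits according to whether $\alpha\in S_p(\Omega)$:
\begin{itemize}
\item If $\alpha\in S_p(\Omega)$, it simply invokes ``monotone convergence and basic properties of $L^q$-norms'' to get $\|e_\alpha\|_{p_k}\to\|e_\alpha\|_p$.
\item If $\alpha\notin S_p(\Omega)$, it uses Fatou's lemma, $\liminf_k\int_\Omega|e_\alpha|^{p_k}\ge\int_\Omega|e_\alpha|^p=\infty$.
\end{itemize}
You instead split the domain at $|e_\alpha|=1$. You apply monotone convergence on one piece and dominated convergence, with majorant $|e_\alpha|^{p_1}$, on the other. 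This gives $\int_\Omega|e_\alpha|^{p_k}\to\int_\Omega|e_\alpha|^p$ in $[0,\infty]$ and covers both cases at once.

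What your version buys is rigor in the finite case. In general, monotone convergence cannot be applied on all of $\Omega$, because $|e_\alpha|^{p_k}$ moves in opposite directions on $\{|e_\alpha|<1\}$ and $\{|e_\alpha|\ge 1\}$. Your splitting is exactly the detail the paper leaves out, and it shows where the hypothesis $\alpha\in S_{p_1}(\Omega)$ enters. What the paper's Fatou step buys is brevity in the divergent case, since it needs neither the splitting nor monotonicity of $(p_k)$.

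You also treat the numerator at points with $e_\alpha(w)=0$, which the paper ignores. Two small corrections there:
\begin{itemize}
\item This situation needs a strictly positive entry $\alpha_j$ with $w_j=0$, not merely a nonnegative one.
\item At $p=1$, setting $\overline{\zeta}|\zeta|^{-1}=0$ at $\zeta=0$ is a convention you must impose. It does not follow from the modulus being $|e_\alpha(w)|^{p-1}$, which would be $0^0$.
\end{itemize}

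Your closing explanation of the monotonicity hypothesis is not accurate, though it does not affect the proof. Monotonicity is a convenience, not a necessity. Suppose $p_k\to p$ is arbitrary with $\alpha\in S_{p_k}(\Omega)$ for all $k$. Every subsequence has a further monotone subsequence, along which the lemma gives convergence to the same limit $E_{\alpha,p}(z,w)$, so the whole sequence converges. This unrestricted version is in fact what the proof of Theorem~\ref{thm:continuity-p} needs when it lets $q\to p$.
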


\begin{proof}
There are two cases.

\smallskip\noindent\textit{Case 1: $\alpha\in S_p(\Omega)$.}

In this case $e_\alpha\in L^p(\Omega)$, and by monotone convergence and basic properties of $L^q$–norms, $\|e_\alpha\|_{L^{p_k}(\Omega)}\to\|e_\alpha\|_{L^p(\Omega)}$ as $k\to\infty$. Furthermore, the factor $\|e_\alpha(w)\|^{p_k-2}$ converges to $\|e_\alpha(w)\|^{p-2}$ for each fixed $w$, since it is a continuous function of the exponent. Combining these facts yields convergence of the quotient defining $E_{\alpha,p_k}(z,w)$ to $E_{\alpha,p}(z,w)$.

\smallskip\noindent\textit{Case 2: $\alpha\notin S_p(\Omega)$.}

Here $e_\alpha\notin L^p(\Omega)$ while $e_\alpha\in L^{p_k}(\Omega)$ for all $k$. Suppose by contradiction that $\|e_\alpha\|_{L^{p_k}(\Omega)}^{p_k}$ remains bounded by some constant $M>0$ along the sequence. Then by Fatou's lemma we would have
\[
\int_\Omega |e_\alpha|^p \le \liminf_{k\to\infty}\int_\Omega |e_\alpha|^{p_k}\le M,
\]
which contradicts the assumption that $e_\alpha\notin L^p(\Omega)$. Hence $\|e_\alpha\|_{L^{p_k}(\Omega)}^{p_k}\to\infty$. Consequently $E_{\alpha,p_k}(z,w)\to 0$ for all $z,w\in\Omega$, which agrees with $E_{\alpha,p}(z,w)$ by definition.
\end{proof}


To pass from pointwise convergence of summands to continuity of the full kernel, we need a uniform $\ell^1$–majorant for the family $\{E_{\alpha,q}\}$ when $(z,w)$ is restricted to a compact set and $q$ ranges in a neighbourhood of a fixed exponent $p$. This is provided by a Bergman-type inequality.

\begin{lemma}\label{lem:domination}
Let $\Omega\subset\mathbb C^n$ be a pseudoconvex Reinhardt domain, $p\ge 1$, and $K\Subset\Omega$ a compact set. Then there exists a non-negative function $g_{K,p}:\mathbb Z^n\to[0,\infty)$ such that
\[
|E_{\alpha,q}(z,w)|\le g_{K,p}(\alpha)
\]
for all $z,w\in K$, all $q$ in a neighbourhood of $p$, and all $\alpha\in\mathbb Z^n$. Moreover,
\[
\sum_{\alpha\in S_p(\Omega)\cup S_q(\Omega)} g_{K,p}(\alpha)<\infty
\]
for every such $q$.
\end{lemma}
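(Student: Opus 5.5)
We bound each summand by a sub-mean-value estimate on a Reinhardt neighbourhood of $K$, and then use the geometric decay from Remark~\ref{rem:CompactSet} to make the bounds summable. First replace $K$ by its Reinhardt hull $\widehat K=\{(e^{i\tau_1}z_1,\dots,e^{i\tau_n}z_n): z\in K\}$. This set is again a compact subset of $\Omega$, since the torus acts continuously and preserves $\Omega$, and $\max_K|e_\alpha|=\max_{\widehat K}|e_\alpha|$. Fix a compact interval $I=[p-\delta,p+\delta]\cap[1,\infty)$, which will be the neighbourhood of $p$. For $z,w\in K$ and $\alpha\in S_q(\Omega)$ we have
\[
|E_{\alpha,q}(z,w)|\le \frac{\big(\max_{K}|e_\alpha|\big)^{q}}{\|e_\alpha\|_{q}^{q}} ,
\]
and the summand is $0$ if $\alpha\notin S_q(\Omega)$. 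So it suffices to bound the ratio $\big(\max_K|e_\alpha|\big)^q/\|e_\alpha\|_q^q$ uniformly in $q\in I$, by a function of $\alpha$ alone that is summable over $\mathbb Z^n$.

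\textbf{Step 1 (lower bound for the norm).} Apply Remark~\ref{rem:CompactSet} to $\widehat K$. This gives a compact $K'\subset\Omega$ and $\theta\in(0,1)$ with $\max_{K}|z^\alpha|\le\theta^{|\alpha|}\max_{K'}|z^\alpha|$ for all $\alpha\in\mathbb Z^n$, and we may enlarge $K'$ to be Reinhardt. Choose $r>0$ so that the closed $r$-neighbourhood $K''$ of $K'$ lies in $\Omega$. For $\zeta\in K'$, apply the plurisubharmonic sub-mean-value property of $|e_\alpha|^q$ on the polydisc $P(\zeta,r)\subset K''$:
\[
|e_\alpha(\zeta)|^q\le \frac{1}{\mathrm{vol}(P(\zeta,r))}\int_{P(\zeta,r)}|e_\alpha|^q\,dV\le C_r\,\|e_\alpha\|_{q}^{q}.
\]
Here $C_r=(\pi r^2)^{-n}$ does not depend on $q$ or $\alpha$. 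One point needs care: if $\alpha$ has negative entries and $K'$ meets coordinate hyperplanes, then $e_\alpha$ is not holomorphic there. We handle this by noting that when $\alpha\in S_q(\Omega)$ has $\alpha_j<0$, the domain $\Omega$ must avoid $\{z_j=0\}$, since $|z_j|^{\alpha_j q}$ is not locally integrable near $z_j=0$ once $\alpha_j q\le -2$. For the remaining case $\alpha_j=-1$ with $q<2$, we choose $K'$ and $r$ to avoid the hyperplane, or we argue via the Laurent structure.

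\textbf{Step 2 (assembling the majorant).} Combining Step 1 with Remark~\ref{rem:CompactSet} gives
\[
\frac{(\max_K|e_\alpha|)^{q}}{\|e_\alpha\|_q^q}\le C_r\,\theta^{q|\alpha|}\le C_r\,\theta^{(p-\delta)|\alpha|}\quad\text{for } q\in I \text{ with } p-\delta\ge1,
\]
and in any case this is at most $C_r\theta^{|\alpha|}$, since $q\ge 1$ and $\theta<1$. Define $g_{K,p}(\alpha):=C_r\,\theta^{|\alpha|}$. Then $\sum_{\alpha\in\mathbb Z^n}\theta^{|\alpha|}=\big(\tfrac{1+\theta}{1-\theta}\big)^n<\infty$, so summability holds over all of $\mathbb Z^n$, and in particular over $S_p(\Omega)\cup S_q(\Omega)$.

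\textbf{Main obstacle.} The delicate point is Remark~\ref{rem:CompactSet} itself, which needs $\theta$ and $K'$ independent of $\alpha$. This is exactly where pseudoconvexity (log-convexity of $|\Omega|$) enters, and we take it from \cite[Lemma 1.6.2]{Jarnicki2017FirstDomains}. The other point needing care is the uniform sub-mean-value estimate when Laurent monomials with negative exponents appear. There I would try first to shrink $K'$ so that it stays off every coordinate hyperplane that $\Omega$ meets, using that $\Omega$ avoids $\{z_j=0\}$ whenever some allowable index has $\alpha_j\le -2/q$.
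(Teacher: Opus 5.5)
Your proof is correct. It uses the same two ingredients as the paper: Remark~\ref{rem:CompactSet}, followed by the mean-value (Bergman) inequality on a slightly larger compact set, which gives the majorant $C\theta^{|\alpha|}$. Where you differ is in how you handle the mixed factor $|e_\alpha(z)|\,|e_\alpha(w)|^{q-1}$.

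The paper writes this factor exactly as $|e_\alpha(t_q)|^q$ with $|t_{j,q}|=|z_j|^{1/q}|w_j|^{(q-1)/q}$. So $t_q$ lies in the geometric-mean set $|K|^{1/q}\cdot|K|^{(q-1)/q}\times\mathbb T^n$. Log-convexity of $|\Omega|$ is what puts this set inside $\Omega$. The paper also has to confine $q$ to a neighbourhood of $p$ so that all these sets sit in one compact $K'$.

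You instead bound the factor by $(\max_K|e_\alpha|)^q$, which is legitimate because $q-1\ge 0$. For compact $K$ this loses nothing, and it gives more than the paper's route. Your majorant $C_r\theta^{|\alpha|}$ holds for all $q\ge 1$ at once, so the interval $I$ plays no role, and pseudoconvexity is never used.

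This also means your ``main obstacle'' remark misplaces pseudoconvexity. Remark~\ref{rem:CompactSet} holds for every Reinhardt domain. For small $\varepsilon$ the compact set $L_\varepsilon=\{(e^{s_1}z_1,\dots,e^{s_n}z_n): z\in\widehat K,\ |s_j|\le\varepsilon\}$ lies in $\Omega$. The point $(e^{\varepsilon\,\mathrm{sgn}(\alpha_1)}z_1,\dots,e^{\varepsilon\,\mathrm{sgn}(\alpha_n)}z_n)\in L_\varepsilon$ has $|e_\alpha|$ larger by the factor $e^{\varepsilon|\alpha|}$, so $\theta=e^{-\varepsilon}$ works. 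The paper's exact identity mirrors the Chakrabarti--Edholm convergence argument, where each summand is one monomial evaluated at a single point of $\Omega$, but for this lemma it gains nothing over your bound.

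Two small clean-ups.

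\begin{itemize}
\item The Laurent-monomial hedge in Step~1 is unnecessary. Here $\alpha\in S_q(\Omega)$ means $e_\alpha\in A^q(\Omega)\subset\mathcal O(\Omega)$, so $\alpha_j<0$ already forces $\Omega\cap\{z_j=0\}=\emptyset$, and $|e_\alpha|^q$ is plurisubharmonic on all of $\Omega$. This holomorphy requirement is exactly what makes the lemma true. Your alternative of moving $K'$ off the hyperplane is not available when $K$ meets it. Moreover, if merely locally integrable $z_j^{-1}$ were admitted, the summands would genuinely blow up near $\{z_j=0\}$.
\item The polydisc $P(\zeta,r)$ has Euclidean radius $r\sqrt n$. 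So take $K''$ to be the closed $r\sqrt n$-neighbourhood of $K'$, or use balls instead of polydiscs.
\end{itemize}
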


\begin{proof}
Before proving the lemma, we first recall the Bergman inequality for functions in $A^{p}$. 
Let $f \in A^{p}(\Omega)$ and let $z \in K \subset\subset \Omega$. Then
\[
|f(z)|^{p} 
\le \frac{1}{\mathrm{vol}(B(0,r))}\int_{B(z,r/2)} |f(\zeta)|^{p}\, dV(\zeta)
\le \frac{1}{\mathrm{vol}(B(0,r))}\int_{\Omega} |f(\zeta)|^{p}\, dV(\zeta),
\]
where $r=\tfrac12 \mathrm{dist}(K,\partial\Omega)$. 
Consequently,
\[
|f(z)|^{p} \le C_{K}\,\|f\|_{p}^{p},
\]
where the constant $C_{K}$ is independent of both $p$ and $z$.

Let $K$ be a compact subset of $\Omega$ and let $z,w\in K$. 
Observe that $\chi_{q}$ converges uniformly to $\chi_{p}$ on compact subsets as $q\to p$. 
Hence there exists a compact set $K'$ such that
\[
|K|^{\frac1q}\,\cdot |K|^{\frac1q-1}\times \mathbb{T}^{n} \subset K' \subset \Omega
\]
for all $q$ in a neighborhood of $p$. \\
Further using remark \ref{rem:CompactSet}, there exist a compact set $K'' \subset \Omega$, $\theta \in (0,1)$ (corresponding to $K'$). Also, there exists a point 
$
t_{q}=(t_{1,q},t_{2,q},\ldots,t_{n,q})\in K'\subset K''
$ such that $t^{q}_{j,q}=z_{j}\,\overline{w}_{j}\,|w_{j}|^{\,q-2}$.\\
Finally it follows that
\[
\bigl|E_{\alpha,q}(z,w)\bigr|
=
\left|
\frac{e_{\alpha}(z)e_{\alpha}(w)\,|e_{\alpha}(w)|^{\,q-2}}
{\|e_{\alpha}\|_{q}^{q}}
\right|
=
\frac{|e_{\alpha}(t_{q})|^{q}}{\|e_{\alpha}\|_{q}^{q}}
\le \theta^{\,|\alpha|}\,C_{K''}.
\]
Therefore, as in Remark~3.1, we set
\[
g_{\alpha}(z)=\theta^{\,|\alpha|}\,C_{K''},
\]
where $\theta\in(0,1)$ depends on $K'$. 
This shows that
\[
\sum_{\alpha\in\mathbb{Z}^{n}} g_{K,p}(\alpha)
= C_{K''}\sum_{\alpha\in\mathbb{Z}^{n}} \theta^{\,|\alpha|}
<\infty.
\]
\end{proof}


\noindent We can now state and prove the main continuity result.

\begin{theorem}\label{thm:continuity-p}
Let $\Omega\subset\mathbb C^n$ be a pseudoconvex Reinhardt domain and $p\in[1,\infty)$. Then the $p$-monomial basis kernel $K_p(z,w)$ depends continuously on $p$ in the sense that
\[
\lim_{q\to p}K_q(z,w)=K_p(z,w)
\]
for every $z,w\in\Omega$. Moreover, the convergence is locally uniform on $\Omega\times\Omega$.
\end{theorem}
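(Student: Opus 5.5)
We plan to prove the theorem by dominated convergence applied to the series \eqref{def:p-MBK}, viewed as a sum over all of $\mathbb Z^n$ with counting measure. Lemma \ref{lem:pointwise} gives convergence of the summands along monotone sequences, and Lemma \ref{lem:domination} supplies a summable majorant.

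\textbf{Reduction to monotone sequences.} It suffices to show that $K_{p_k}(z,w)\to K_p(z,w)$, uniformly on compact $K\times K\subset\Omega\times\Omega$, for every sequence $p_k\to p$. Every sequence converging to $p$ splits into at most two subsequences, one with $p_k\ge p$ and one with $p_k\le p$. Each of these has a further monotone subsequence. Since uniform convergence on $K\times K$ holds for the whole sequence exactly when every subsequence has a further subsequence converging uniformly to the same limit, we may assume $\{p_k\}$ is monotone.

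\textbf{Termwise convergence.} Fix $\alpha\in\mathbb Z^n$ and a monotone sequence $p_k\to p$. There are two cases.
\begin{enumerate}
\item If $\alpha\in S_{p_k}(\Omega)$ for infinitely many $k$, we pass to that subsequence and apply Lemma \ref{lem:pointwise} to get $E_{\alpha,p_k}\to E_{\alpha,p}$.
\item Otherwise $E_{\alpha,p_k}=0$ for all large $k$, so we must show $E_{\alpha,p}=0$, i.e.\ $\alpha\notin S_p(\Omega)$.
\end{enumerate}
The second case is where the main obstacle lies. It requires a semicontinuity property: if $e_\alpha\in L^p$, then $e_\alpha\in L^{p_k}$ for $p_k$ near $p$ on the side the sequence approaches from. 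For monomials on pseudoconvex Reinhardt domains this should follow from the structure of the integral. In log-coordinates, $\int_\Omega|z^\alpha|^q\,dV$ is an integral of an exponential of a linear function over the log-convex set $\log|\Omega|$. Membership in $S_q(\Omega)$ is then governed by strict inequalities on $q\alpha+\mathbf 1$ relative to the recession cone, with boundary behaviour at coordinate hyperplanes handled similarly. Such conditions define sets of $q$ that are open in $[1,\infty)$, though possibly closed at a threshold from one side.

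If the openness fails, we would restrict to $p$ that are not threshold exponents. Alternatively, we would interpret Lemma \ref{lem:pointwise} together with the chain property of the Remark, $S_{p_2}\supseteq S_{p_1}\cap S_{p_3}$, to pass to a subsequence lying in a single side-interval on which $S_q$ is constant and contained in $S_p$. The chain property shows $\{q:\alpha\in S_q\}$ is an interval, which makes this side-by-side analysis tractable.

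\textbf{Uniform bound and conclusion.} By Lemma \ref{lem:domination}, for $q$ in a neighbourhood of $p$ and $z,w\in K$ we have $|E_{\alpha,q}(z,w)|\le g_{K,p}(\alpha)=C\theta^{|\alpha|}$, which is summable over all of $\mathbb Z^n$. Given $\varepsilon>0$, choose a finite set $F\subset\mathbb Z^n$ with $\sum_{\alpha\notin F}2g_{K,p}(\alpha)<\varepsilon$. Then
\[
\sup_{K\times K}|K_{p_k}-K_p|\le\sum_{\alpha\in F}\sup_{K\times K}|E_{\alpha,p_k}-E_{\alpha,p}|+\varepsilon .
\]
For each $\alpha\in F$, the convergence $E_{\alpha,p_k}\to E_{\alpha,p}$ is in fact uniform on $K\times K$. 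Indeed, the norms converge, and $e_\alpha(z)\overline{e_\alpha(w)}|e_\alpha(w)|^{q-2}$ is jointly continuous in $(q,z,w)$ on $[p-\delta,p+\delta]\times K\times K$, hence uniformly continuous there; this uses that $e_\alpha$ is bounded and bounded away from trouble on $K\subset\Omega$. Letting $k\to\infty$ and then $\varepsilon\to0$ gives local uniform convergence, and in particular pointwise convergence.
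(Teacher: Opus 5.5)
\textbf{Main gap: Case 2 is not proved.} Your Case 2 is the crux of the theorem, and it is left open. You need the following: for each $\alpha$, the set $I_\alpha=\{q\in[1,\infty):\alpha\in S_q(\Omega)\}$ is relatively open in $[1,\infty)$. Then an index in $S_p(\Omega)$ stays in $S_q(\Omega)$ for all $q$ near $p$, on \emph{both} sides.

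You only say this ``should follow'', and you allow $I_\alpha$ to be closed at a threshold from one side. If that happened, then along that side $E_{\alpha,p_k}=0\neq E_{\alpha,p}$ and the conclusion would fail. No passage to subsequences can change this, since every subsequence inherits the defect.

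Neither fallback works. Restricting to non-threshold $p$ proves a weaker statement, and it is vacuous for $H_\gamma$ with $\gamma$ irrational, where every $p$ is a threshold (Proposition~\ref{prop:Sp-hartogs}). In the same example, $S_q$ is strictly monotone in $q$, so a one-sided interval on which $S_q(\Omega)$ is constant does not exist.

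The paper's proof passes over this same point: its phrase ``converge to $0$ otherwise'' presupposes it. So your diagnosis is sharper, but the step must be closed, and your recession-cone idea does close it.

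Let $D=\log|\Omega\cap(\mathbb C^*)^n|$, which is open and convex by pseudoconvexity. Holomorphy of $e_\alpha$ only imposes $\alpha_j\ge0$ when $\Omega$ meets $\{z_j=0\}$, a condition independent of $q$. Since coordinate hyperplanes are null, $\|e_\alpha\|_q^q=(2\pi)^n\int_D e^{\langle q\alpha+2\mathbf 1,x\rangle}\,dx$; note the $2\mathbf 1$, not $\mathbf 1$. With $C$ the recession cone of $D$, this is finite if and only if $\langle q\alpha+2\mathbf 1,v\rangle<0$ for all $v\in C$ with $|v|=1$.
\begin{itemize}
\item If the inequality fails for some such $v$, then $\{b+tv:b\in B,\ t\ge0\}\subset D$ for a ball $B\subset D$. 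This set has infinite volume and the integrand is bounded below on it.
\item If the inequality holds, compactness gives $\langle q\alpha+2\mathbf 1,v\rangle\le-c<0$ on $C\cap S^{n-1}$. Since $x/|x|$ accumulates only on $C\cap S^{n-1}$ as $|x|\to\infty$ in $D$, the exponent is at most $-c|x|/2$ for large $x\in D$.
\end{itemize}
The same compactness makes the condition open in $q$.

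Consequently, if $\alpha\in S_p$ then $\alpha\in S_q$ for $q$ in some $[q_1,q_2]\ni p$. Then $\|e_\alpha\|_q^q\to\|e_\alpha\|_p^p$ by domination with $|e_\alpha|^{q_1}+|e_\alpha|^{q_2}$. The case $\alpha\notin S_p$ is handled by your Fatou argument. Your $\alpha$-dependent subsequences in Case 1 also become unnecessary; they are not legitimate inside a single application of dominated convergence anyway.

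\textbf{Secondary problem: uniformity at $p=1$.} Your uniformity step fails at $p=1$. The factor $\overline{e_\alpha(w)}|e_\alpha(w)|^{q-2}$ has modulus $|e_\alpha(w)|^{q-1}$, and it is jointly continuous only for $q$ bounded away from $1$. At $q=1$ it equals $\overline{e_\alpha}/|e_\alpha|$, which is discontinuous on $\{e_\alpha=0\}$.

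This is a defect of the statement itself. For $\Omega=\mathbb D$ and $w\neq0$,
$K_1(z,w)=\frac{1}{2\pi}\sum_{k\ge0}(k+2)\,(z\bar w/|w|)^k$.
For $z\neq0$ this has no limit as $w\to0$ (compare $w=\epsilon$ with $w=-\epsilon$). Each $K_q(z,\cdot)$ with $q>1$ is continuous, so $K_q\to K_1$ is not locally uniform near $w=0$.

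For $p>1$, choosing $\delta<p-1$ makes your finite-$F$ argument correct. At $p=1$ you only get local uniformity on compacts avoiding the coordinate hyperplanes, plus pointwise convergence under the convention $\overline{0}\,|0|^{-1}=0$.
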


\begin{proof}
Fix $p\in[1,\infty)$ and $z,w\in K \subset \subset \Omega$. For $q$ close to $p$ we write
\begin{multline*}
K_p(z,w)-K_q(z,w)=\sum_{\alpha\in S_p(\Omega)\cap S_q(\Omega)}\bigl(E_{\alpha,p}(z,w)-E_{\alpha,q}(z,w)\bigr)-\sum_{\alpha\in S_q(\Omega)\setminus S_p(\Omega)}E_{\alpha,q}(z,w)\\ +\sum_{\alpha\in S_p(\Omega)\setminus S_q(\Omega)}E_{\alpha,p}(z,w).
\end{multline*}
By Lemma~\ref{lem:pointwise}, for each fixed $\alpha$ the functions $E_{\alpha,q}(z,w)$ converge to $E_{\alpha,p}(z,w)$ as $q\to p$ whenever $\alpha$ remains allowable, and converge to $0$ otherwise. By Lemma~\ref{lem:domination}, for $q$ in a small neighbourhood of $p$ there exists a summable function $g_{K,p}(\alpha)$ dominating $|E_{\alpha,q}(z,w)|$ uniformly in $q$. Hence the family of partial sums satisfies the hypotheses of the dominated convergence theorem on the discrete index set $\mathbb Z^n$, and we conclude that
\[
\lim_{q\to p}\bigl(K_p(z,w)-K_q(z,w)\bigr)=0.
\]
\end{proof}

\section{\texorpdfstring{$p$}{p}-Bergman spaces and Threshold exponents for monomial polyhedra}
\label{sec:polyhedra}

We now focus our attention on certain special classes of monomial polyhedra. These examples of monomial polyhedra were noted in \cite[section 1.6]{bender2022regularity}.

\begin{definition}
    A bounded domain ${U} \subset \mathbb{C}^n$ is called a \emph{monomial polyhedron} if there exist precisely $n$ monomials $e_{\alpha_1},\ldots,e_{\alpha_n}$ such that
\[
{U}= \{\, z \in \mathbb{C}^n : |e_{\alpha_1}(z)| < 1,\; \ldots,\; |e_{\alpha_n}(z)| < 1 \,\}.
\]
\end{definition}

\subsection{Monomial polyhedra associated to integer matrices}

Let $A=(a_{ij})\in GL_n(\mathbb Z)$ be an invertible integer matrix. To $A$ we associate the monomial polyhedra
\[
\Omega_A:=\left\{z\in\mathbb C^n : \bigl|\prod_{i=1}^n z_i^{a_{ij}}\bigr|<1,\ \text{for }j=1,\dots,n\right\}.
\]
These are bounded pseudoconvex Reinhardt domains whose shadows are defined by linear inequalities in logarithmic coordinates. For concreteness, we begin with the case $n=2$.

\begin{example}\label{ex:OmegaA-2D}
Let
\[
A=\begin{pmatrix} a & -b\\ -c & d\end{pmatrix},
\]
with $a,b,c,d>0$ and $\det A=ad-bc>0$, and assume $\gcd(a,b)=\gcd(c,d)=1$. Then
\[
\Omega_A=\{(z_1,z_2)\in\mathbb C^2: |z_1|^a<|z_2|^b,\ |z_2|^d<|z_1|^c\}.
\]
\end{example}

\noindent We now compute the $p$–allowable index set for $\Omega_A$.

\begin{proposition}\label{prop:Sp-OmegaA}
For the domain $\Omega_A$ of Example~\ref{ex:OmegaA-2D} and $1\le p<\infty$, the $p$–allowable index set is
\[
S_p(\Omega_A)=\left\{(\alpha_1,\alpha_2)\in\mathbb Z^2 : (a\alpha_2+b\alpha_1)p>-2(a+b),\ (c\alpha_2+d\alpha_1)p>-2(c+d)\right\}.
\]
Moreover, the set of Threshold exponents is
\[
\mathrm{Threshold}(\Omega_A)=\left\{\frac{2(a+b)}{j},\ \frac{2(c+d)}{k} : 1\le j\le 2(a+b)-1,\ 1\le k\le 2(c+d)-1\right\}.
\]
\end{proposition}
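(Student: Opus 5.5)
The plan is to compute $\|e_\alpha\|_{p,\Omega_A}^p$ directly in logarithmic coordinates on the Reinhardt shadow. This reduces the integrability of $e_\alpha$ to the finiteness of an exponential integral over a two-dimensional cone. The Threshold set can then be read off from the resulting linear inequalities.

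\textbf{Step 1: reduction to an integral over a cone.} The inequalities $|z_1|^a<|z_2|^b$ and $|z_2|^d<|z_1|^c$ force $z_1z_2\neq0$ on $\Omega_A$. Hence every Laurent monomial $e_\alpha$, $\alpha\in\mathbb Z^2$, is holomorphic on $\Omega_A$, and $S_p(\Omega_A)$ is determined purely by integrability. In polar coordinates,
\[
\int_{\Omega_A}|e_\alpha|^p\,dV=(2\pi)^2\int_{|\Omega_A|}r_1^{p\alpha_1+1}r_2^{p\alpha_2+1}\,dr_1\,dr_2 .
\]
Substituting $x_j=\log r_j$ turns this into
\[
(2\pi)^2\int_{\mathcal C}e^{(p\alpha_1+2)x_1+(p\alpha_2+2)x_2}\,dx,
\qquad \mathcal C=\{ax_1<bx_2,\ dx_2<cx_1\}.
\]
Since $a,b,c,d>0$ and $ad-bc>0$, the set $\mathcal C$ is an open convex cone in the negative quadrant. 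Its extremal rays are spanned by $-(b,a)$ and $-(d,c)$.

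\textbf{Step 2: integrability criterion.} I parametrize the cone as $x=-s(b,a)-t(d,c)$ with $s,t>0$; the Jacobian is the constant $ad-bc$. The integral then factors as a product of two one-dimensional integrals:
\[
\int_0^\infty e^{-s[(p\alpha_1+2)b+(p\alpha_2+2)a]}\,ds\cdot\int_0^\infty e^{-t[(p\alpha_1+2)d+(p\alpha_2+2)c]}\,dt .
\]
This product is finite if and only if both brackets are positive, that is,
\[
p(b\alpha_1+a\alpha_2)>-2(a+b)\quad\text{and}\quad p(d\alpha_1+c\alpha_2)>-2(c+d).
\]
These are exactly the conditions in the statement, with the coefficients attached as in this computation. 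This gives the formula for $S_p(\Omega_A)$.

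\textbf{Step 3: Threshold exponents.} Put $m=b\alpha_1+a\alpha_2$ and $m'=d\alpha_1+c\alpha_2$.
\begin{itemize}
\item For $m\ge0$ the first condition holds for every $p$. For $m=-j<0$ it reads $p<2(a+b)/j$; likewise for $m'=-k<0$ the second reads $p<2(c+d)/k$.
\item Membership of $\alpha$ in $S_p$ can therefore only change at the points $2(a+b)/j$ and $2(c+d)/k$. These points accumulate only at $0$, so each $p\ge1$ not of this form has a neighbourhood on which $S_q(\Omega_A)$ is constant.
\item For $p_0=2(a+b)/j$, I need some $\alpha$ with $m=-j$ whose second condition holds near $p_0$. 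Because $\gcd(a,b)=1$, the line $b\alpha_1+a\alpha_2=-j$ contains integer points. Moving along it by multiples of $(a,-b)$ changes $m'$ by multiples of $ad-bc>0$, so I can make $m'\ge0$. Such an $\alpha$ lies in $S_q$ for $q<p_0$ and not in $S_{p_0}$, so $p_0$ is a Threshold exponent. The case $2(c+d)/k$ is symmetric, using $\gcd(c,d)=1$.
\end{itemize}

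\textbf{Step 4: the range of $j,k$.} This is the main point of care. Only the values $2(a+b)/j$ with $j\le 2(a+b)$ lie in $[1,\infty)$. The endpoint $j=2(a+b)$ gives $p_0=1$, and there the jump occurs only from the left. Since exponents $q<1$ are not considered, $S_q$ is constant on $[1,1+\varepsilon)$, so $p=1$ is not a Threshold exponent. This yields the stated ranges $1\le j\le 2(a+b)-1$ and $1\le k\le 2(c+d)-1$.
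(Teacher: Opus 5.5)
Your proof is correct, but you evaluate the norm differently from the paper. The paper integrates in polar coordinates as an iterated integral: first in $r_1$ over $(r_2^{d/c},r_2^{b/a})$, then in $r_2$ over $(0,1)$. This gives the closed form
\[
\|e_\alpha\|_{p,\Omega_A}^p=\frac{(2\pi)^2(ad-bc)}{\bigl((b\alpha_1+a\alpha_2)p+2(a+b)\bigr)\bigl((d\alpha_1+c\alpha_2)p+2(c+d)\bigr)},
\]
and the paper reads the finiteness conditions off this formula. Your substitution $x=-s(b,a)-t(d,c)$ straightens the logarithmic shadow into the quadrant $\{s,t>0\}$, and Tonelli then splits the integral into two one-variable exponential integrals. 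This yields the same closed form and, at the same stroke, the exact finiteness criterion, with no case distinctions.

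The paper's iterated computation is more elementary and matches its treatment of $\Omega_B$ and $\Omega_C$. As written, however, it divides by $\alpha_1p+2$ (which vanishes, e.g., for $p=2$, $\alpha_1=-1$) and uses $\int_0^1 r^\beta\,dr=1/(\beta+1)$ formally. Deducing ``finite iff both factors are positive'' from it therefore needs a split on the sign of $\alpha_1p+2$ that the paper leaves implicit. Your change of variables avoids this and extends verbatim to any monomial polyhedron in $\mathbb{C}^n$.

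For the Threshold exponents, the paper only asserts that they are the $p$ at which one inequality becomes an equality for some index, and invokes $\gcd(a,b)=1$. Your Step 3 supplies what that assertion needs. You produce an index with $m=-j$ whose other inequality still holds near $p_0$, by translating along $(a,-b)$ and using $ad-bc>0$. You also show $S_q$ is locally constant away from the candidate points. Your Step 4 makes explicit the convention (only $q\ge 1$ admissible) under which $p=1$, i.e.\ $j=2(a+b)$, is excluded; the paper drops this case without comment.
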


\begin{proof}
Suppose $(\alpha_{1}, \alpha_{2}) \in S_{p}(\Omega_{A})$ and $p \in [1, +\infty)$. Consider
\begin{equation}\label{E:E1}
\norm{e_{\alpha}}^{p}_{p,\Omega_{A}} = \int_{\Omega_{A}}{\vert e_{\alpha}\vert^{p} dV} = \int_{\Omega_{A}}{\vert z_{1}\vert^{\alpha_{1} p} \vert z_{2}\vert^{\alpha_{2} p} dV}.
\end{equation}
If $(z_{1},z_{2}) \in \Omega_{A}$, then $0 <\vert z_{2}\vert < 1$, and $\vert z_{2}\vert^{\frac{d}{c}} < \vert z_{1}\vert < \vert z_{2}\vert^{\frac{b}{a}}$.
Putting this in Equation \ref{E:E1}, we get
\begin{align*}
\norm{e_{\alpha}}^{p}_{p,\Omega_{A}} &= (2\pi)^{2}\int_{0}^{1} \int_{r_{2}^{\frac{d}{c}}}^{r_{2}^{\frac{b}{a}}} r_{1}^{\alpha_{1} p +1} r_{2}^{\alpha_{2} p +1} dr_{1} dr_{2}\\
&= \frac{(2 \pi)^2}{(\alpha_{1}p + 2)} \int_{0}^{1}(r_{2}^{\frac{b}{a}(\alpha_{1}p+2)}-r_{2}^{\frac{d}{c}(\alpha_{1}p+2)})r_{2}^{\alpha_{2}p +1} dr_{2}\\
&= \frac{(2 \pi)^2}{(\alpha_{1}p +2)} \left[\int_{0}^{1}r_{2}^{\frac{(b \alpha_{1} + a \alpha_{2})p + 2b + a}{a}} dr_{2} - \int_{0}^{1}r_{2}^{\frac{(d \alpha_{1} + c \alpha_{2})p + 2d + c}{c}} dr_{2}\right]\\
&= \frac{(2 \pi)^2}{(\alpha_{1}p + 2)} \left[\frac{a}{(b \alpha_{1} + a \alpha_{2})p + 2(b+a)} - \frac{c}{(d \alpha_{1} + c \alpha_{2})p + 2(d+c)}\right]\\
&= \frac{(2 \pi)^{2} (ad-bc)}{((b \alpha_{1} + a \alpha_{2})p + 2(b+a)) ((d \alpha_{1} + c \alpha_{2})p + 2(d+c))}.
\end{align*}

\noindent Therefore, $\norm{e_{\alpha}}^{p}_{p,\Omega_{A}} < +\infty$ if and only if
\[
(b \alpha_{1} + a \alpha_{2})p + 2(b+a) > 0 \ \  \& \ \ (d \alpha_{1} + c \alpha_{2})p + 2(d+c) > 0.
\]
It is clear that the Threshold exponents are the values of $p \in [1, +\infty)$ such that atleast one of the above inequalities becomes an equality of some index $(\alpha_{1}, \alpha_{2}) \in \mathbb{Z}^2$. Define
\begin{align*}
&S_{1} := \{p \in [1, +\infty) : p = \frac{-2(a+b)}{\alpha_{2}a + \alpha_{1} b}, \ \text{for some} \ (\alpha_{1}, \alpha_{2}) \in \mathbb{Z}^2\},\\
&S_{2} := \{p \in [1, +\infty) : p = \frac{-2(c+d)}{\alpha_{2}c + \alpha_{1} d}, \ \text{for some} \ (\alpha_{1}, \alpha_{2}) \in \mathbb{Z}^2\}.
\end{align*}
Suppose $p \in S_{1}$. The condition $p \ge 1$ gives $0 < -(\alpha_{2}a + \alpha_{1} b) \le 2(a+b)$. Since $\text{g.c.d}(a,b) = 1$, therefore $-(\alpha_{2}a + \alpha_{1} b) \in \{l \in \mathbb{Z}: \ 1 \le l \le 2(a+b) -1\}.$
Thus, we have
\[
S_{1} = \left\{\frac{2(a+b)}{l} : l \in \mathbb{Z}, \ 1 \le l \le 2(a+b) - 1\right\}.
\]
Similarly,
\[
S_{2} = \left\{\frac{2(c+d)}{l} : l \in \mathbb{Z}, \ 1 \le l \le 2(c+d) - 1\right\}.
\]
From above, we know $\mathrm{Threshold}(\O_A) = S_{1} \cup S_{2}$, therefore
\[
\mathrm{Threshold}(\O_A) = \left\{\frac{2(a+b)}{j}, \frac{2(c+d)}{k} : j, k \in \mathbb{Z}, \ 1 \le j \le 2(a+b) - 1, \ \& \
1 \le k \le 2(c+d) - 1\right\}.
\]
\end{proof}


\subsection{Higher dimensional monomial polyhedra of ``type 1'' and ``type 2''}

We now consider two families of higher dimensional monomial polyhedra.

\begin{example}[Type 1]\label{ex:type1}
Let $k_1,\dots,k_n\in\mathbb N$ with $\gcd(k_1,\dots,k_n)=1$, and consider the matrix
\[
B=\begin{pmatrix}
k_1 & -k_2 & \cdots & -k_n\\
0 & 1 & & 0\\
\vdots & & \ddots & \vdots\\
0 & 0 & \cdots & 1
\end{pmatrix}.
\]
The associated domain is
\[
\Omega_B=\left\{(z_1,\dots,z_n)\in\mathbb C^n : |z_1|^{k_1}<|z_2|^{k_2}\cdots|z_n|^{k_n},\ |z_j|<1,\ j=2,\dots,n\right\}.
\]
\end{example}

\begin{proposition}\label{prop:Sp-type1}
Let $\Omega_B$ be as in Example~\ref{ex:type1} and $1\le p<\infty$. Then
\[
S_p(\Omega_B)=\left\{(\alpha_1,\dots,\alpha_n)\in\mathbb Z^n : \alpha_1p>-2,\ (k_j\alpha_1+k_1\alpha_j)p>-2(k_1+k_j)\ \text{for all }2\le j\le n\right\}.
\]
The set of Threshold exponents is
\[
\mathrm{Threshold}(\Omega_B)=\left\{\frac{R_{1,j}}{\ell_j} : 2\le j\le n,\ 1\le \ell_j\le R_{1,j}-1\right\},
\]
where
\[
R_{1,j}:=\frac{2(k_1+k_j)}{\gcd(k_1,k_j)}.
\]
\end{proposition}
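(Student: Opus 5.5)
The plan is to compute $\|e_\alpha\|_{p,\Omega_B}^p$ explicitly, as in Proposition~\ref{prop:Sp-OmegaA}, by iterated integration in polar coordinates. Then I read off the finiteness conditions and locate exactly the values of $p$ at which one of them switches.

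\emph{Step 1 (the norm).} Pass to polar coordinates in each variable. By Tonelli's theorem, since the integrand is non-negative,
\[
\|e_\alpha\|_{p,\Omega_B}^p=(2\pi)^n\int_{(0,1)^{n-1}}\Bigl(\int_0^{(r_2^{k_2}\cdots r_n^{k_n})^{1/k_1}} r_1^{\alpha_1p+1}\,dr_1\Bigr)\prod_{j=2}^n r_j^{\alpha_jp+1}\,dr_2\cdots dr_n .
\]
The inner integral has lower limit $0$ and a positive upper limit for every admissible $(r_2,\dots,r_n)$. It is therefore finite if and only if $\alpha_1p+2>0$, and otherwise it is $+\infty$ on a set of positive measure. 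When it is finite it equals $(\alpha_1p+2)^{-1}\prod_{j\ge2} r_j^{k_j(\alpha_1p+2)/k_1}$. The remaining integral then factors into the one-variable integrals
\[
\int_0^1 r_j^{\alpha_jp+1+k_j(\alpha_1p+2)/k_1}\,dr_j ,\qquad 2\le j\le n .
\]
The $j$-th integral is finite if and only if the exponent exceeds $-1$. Multiplying by $k_1$, this is exactly $(k_j\alpha_1+k_1\alpha_j)p>-2(k_1+k_j)$. This gives the stated description of $S_p(\Omega_B)$.

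\emph{Step 2 (candidate thresholds).} Every condition in Step 1 is a strict inequality, linear in $p$. So $S_q$ can change only at a value $p_0$ where some $\alpha$ turns one of them into an equality. The condition $\alpha_1p>-2$ gives $p_0\in\{1,2\}$. The $j$-th condition gives $p_0=2(k_1+k_j)/m$, where $m=-(k_j\alpha_1+k_1\alpha_j)$ is a positive multiple of $g_j=\gcd(k_1,k_j)$, and every such multiple is attained. Writing $m=\ell g_j$, the requirement $p_0\ge1$ becomes $p_0=R_{1,j}/\ell$ with $1\le\ell\le R_{1,j}$.

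Two of these candidates need separate treatment.
\begin{enumerate}
\item The value $p_0=1$ (that is, $\ell=R_{1,j}$, or $\alpha_1=-2$) is not a threshold. The corresponding inequality fails at $q=1$ and continues to fail for every $q>1$, and there is no admissible $q<1$.
\item The value $p_0=2$ coming from $\alpha_1=-1$ already lies in the list, since $2=R_{1,j}/\ell$ with $\ell=R_{1,j}/2$.
\end{enumerate}

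\emph{Step 3 (each listed value is a genuine threshold).} This is the main obstacle: the other inequalities must not interfere. Fix $j$ and $1\le\ell\le R_{1,j}-1$, and set $p_0=R_{1,j}/\ell>1$. I will choose $(\alpha_1,\alpha_j)$ with $k_j\alpha_1+k_1\alpha_j=-\ell g_j$ and $\alpha_1\ge0$. This is possible because the solutions form a line $(\alpha_1,\alpha_j)+t(k_1,-k_j)/g_j$, so $\alpha_1$ can be shifted as large as needed. For $i\ne1,j$ take $\alpha_i\ge0$.

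With these choices $\alpha_1p>-2$ holds automatically. For $i\ne1,j$ we get $(k_i\alpha_1+k_1\alpha_i)p\ge0>-2(k_1+k_i)$. Hence $\alpha\in S_q$ exactly when $q\ell g_j<2(k_1+k_j)$, that is, exactly when $q<p_0$. So $S_q\ne S_{p_0}$ for every $q<p_0$, and $p_0$ is a threshold.

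Conversely, suppose $p$ lies in the complement of the set of candidates. This set of candidate values is discrete: only finitely many of them exceed any given bound, and none lie below $1$. The family of inequalities is infinite, but for $q$ in a small neighbourhood of such a $p$ no index changes the truth value of any of its conditions. Hence $S_q=S_p$ there, and these $p$ are not thresholds. This completes the identification of $\mathrm{Threshold}(\Omega_B)$.
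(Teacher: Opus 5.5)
Your proposal is correct and follows the same route as the paper. Both arguments compute $\|e_\alpha\|_{p,\Omega_B}^p$ by iterated integration in polar coordinates, read off the linear conditions, and then identify the thresholds as the values of $p$ at which one of these conditions becomes an equality for some lattice point. Your version is more careful than the paper's at several points the paper only asserts: the Step 3 witness (choosing $\alpha_1\ge 0$ and $\alpha_i\ge 0$ for $i\ne 1,j$, so that only the $j$-th condition can change), the converse via finiteness of the candidate set in $[1,\infty)$, and the explicit handling of $p=1$ and $p=2$. The paper simply takes for granted that every equality value is a threshold, and it drops $\ell_j=R_{1,j}$ without justification, even though its own reasoning from $p\ge 1$ would include it.
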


\begin{proof}
Suppose $(\alpha_{1}, \alpha_{2}, \ldots, \alpha_{n}) \in S_{p}(\Omega_{B}) = \{\alpha \in \mathbb{Z}^{n} : e_{\alpha} \in L^{p}(\Omega_{B})\}$ and $p \in [1, +\infty)$. Consider
\begin{equation}\label{E:E2}
\norm{e_{\alpha}}^{p}_{p,\Omega_{B}} = \int_{\Omega_{B}}{\vert e_{\alpha}\vert^{p} dV} = \int_{\Omega_{B}}{\vert z_{1}\vert^{\alpha_{1} p} \vert z_{2}\vert^{\alpha_{2} p} \ldots \vert z_{n}\vert^{\alpha_{n} p} dV}.
\end{equation}
If $(z_{1},z_{2}, \ldots, z_{n}) \in \Omega_{B}$, then $0 <\vert z_{j}\vert < 1$, for all $2 \le j \le n$, and $0 \le \vert z_{1}\vert < \vert z_{2}\vert^{\frac{k_{2}}{k_{1}}} \vert z_{3}\vert^{\frac{k_{3}}{k_{1}}} \ldots \vert z_{n}\vert^{\frac{k_{n}}{k_{1}}}$.
Putting this in Equation \ref{E:E2}, we get
\begin{align*}
\norm{e_{\alpha}}^{p}_{p,\Omega_{B}} &= (2\pi)^{n}\int_{0}^{1} \int_{0}^{1} \ldots \int_{0}^{1} \int_{0}^{r_{2}^{\frac{k_{2}}{k_{1}}} r_{3}^{\frac{k_{3}}{k_{1}}} \ldots r_{n}^{\frac{k_{n}}{k_{1}}}} r_{1}^{\alpha_{1} p +1} r_{2}^{\alpha_{2} p +1} \ldots r_{n}^{\alpha_{n} p +1} dr_{1} dr_{2} \ldots dr_{n}\\
&= \frac{(2 \pi)^n}{(\alpha_{1}p + 2)} \int_{0}^{1} \int_{0}^{1} \ldots \int_{0}^{1}(r_{2}^{\frac{k_{2}}{k_{1}}} r_{3}^{\frac{k_{3}}{k_{1}}} \ldots r_{n}^{\frac{k_{n}}{k_{1}}})^{(\alpha_{1}p + 2)} r_{2}^{\alpha_{2}p +1} \ldots r_{n}^{\alpha_{n} p +1} dr_{2} \ldots dr_{n}\\
&= \frac{(2 \pi)^n}{(\alpha_{1}p +2)} \left[\int_{0}^{1} \int_{0}^{1} \ldots \int_{0}^{1} r_{2}^{\frac{k_{2} (\alpha_{1}p +2)}{k_{1}}} r_{3}^{\frac{k_{3} (\alpha_{1}p+2)}{k_{1}}} \ldots r_{n}^{\frac{k_{n} (\alpha_{1}p+2)}{k_{1}}} r_{2}^{\alpha_{2}p +1} r_{3}^{\alpha_{3}p +1} \ldots r_{n}^{\alpha_{n} p +1} dr_{2} dr_{3} \ldots dr_{n}\right]\\
&= \frac{(2 \pi)^n}{(\alpha_{1}p +2)} \left[\int_{0}^{1} \int_{0}^{1} \ldots \int_{0}^{1} r_{2}^{\frac{k_{2} (\alpha_{1}p +2)}{k_{1}} + (\alpha_{2}p+1)} r_{3}^{\frac{k_{3} (\alpha_{1}p+2)}{k_{1}} + (\alpha_{3}p+1)} \ldots r_{n}^{\frac{k_{n} (\alpha_{1}p+2)}{k_{1}} + (\alpha_{n}p+1)} dr_{2} dr_{3} \ldots dr_{n}\right]\\
&\hspace{-2mm}= \frac{(2 \pi)^n}{(\alpha_{1}p +2)} \left[\int_{0}^{1} \int_{0}^{1} \ldots \int_{0}^{1} r_{2}^{\frac{(k_{2} \alpha_{1} + k_{1} \alpha_{2})p +2k_{2} +k_{1}}{k_{1}}} r_{3}^{\frac{(k_{3} \alpha_{1} + k_{1} \alpha_{3})p +2k_{3} +k_{1}}{k_{1}}} \ldots r_{n}^{\frac{(k_{n} \alpha_{1} + k_{1} \alpha_{n})p +2k_{n} +k_{1}}{k_{1}}} dr_{2} \ldots dr_{n}\right]\\
&\hspace{-2mm}= \frac{(2 \pi)^n k_{1}^{n-1}}{\left(\alpha_{1}p + 2\right) \left((k_{2} \alpha_{1} + k_{1} \alpha_{2})p + 2(k_{2}+k_{1})\right)  \ldots \left((k_{n} \alpha_{1} + k_{1} \alpha_{n})p + 2(k_{n}+k_{1})\right)}.
\end{align*}

\noindent Therefore, $\norm{e_{\alpha}}^{p}_{p,\Omega_{B}} < +\infty$ if and only if $\alpha_{1} p >-2$, and for $2 \le j \le n$
\[
(k_{j} \alpha_{1} + k_{1} \alpha_{j})p > -2(k_{j}+k_{1}).
\]
Thus, we have 
\[
S_p(\O_B)=\left\{(\a_1,\a_2, \cdots, \a_n)\in \Z^n: \alpha_{1} p >-2, \ \& \ (k_{j} \alpha_{1} + k_{1} \alpha_{j})p > -2(k_{j}+k_{1}) \ \text{for all} \ 2 \le j \le n \right\}.
\]

\noindent The Threshold exponents are the values of $p \in [1, +\infty)$ for which atleast one of the above inequalities becomes an equality for some index $(\alpha_{1}, \alpha_{2}, \ldots, \alpha_{n}) \in \mathbb{Z}^n$. Note that $\alpha_{1} p = -2$ is possible if and only if $p=2$ and $\alpha_{1}=-1$. Suppose $(k_{j} \alpha_{1} + k_{1} \alpha_{j})p = -2(k_{j}+k_{1})$ for some $p \ge 1$ and for some $(\alpha_{1}, \alpha_{2}, \ldots, \alpha_{n}) \in \mathbb{Z}^n$. Since $(k_{j} \alpha_{1} + k_{1} \alpha_{j}) \in \text{g.c.d}(k_{1}, k_{j}) \ \mathbb{Z}$, and $p \in [1, +\infty)$, we have
\[
(k_{j} \alpha_{1} + k_{1} \alpha_{j}) \in \left\{- \text{g.c.d}(k_{1}, k_{j}), -2 \ \text{g.c.d}(k_{1}, k_{j}), \ldots, -\left( \frac{2(k_{1}+k_{j})}{\text{g.c.d}(k_{1}, k_{j})} - 1 \right) \ \text{g.c.d}(k_{1}, k_{j})\right\}.
\]
Therefore, we have
\[
p = \frac{-2(k_{1}+k_{j})}{(k_{j} \alpha_{1} + k_{1} \alpha_{j})} \in \left\{\frac{R_{1,j}}{l} : R_{1,j}= \frac{2(k_{1}+k_{j})}{\text{g.c.d}(k_{1}, k_{j})}, \ \text{and} \ 1 \le l \le R_{1,j}-1\right\}.
\]
Thus, the set of Threshold exponents is given by
\[
\mathrm{Threshold}(\O_B) = \left\{\frac{R_{1,j}}{l_{j}} : 2 \le j \le n, \ R_{1,j}= \frac{2(k_{1}+k_{j})}{\text{g.c.d}(k_{1}, k_{j})}, \ \text{and} \ 1 \le l_{j} \le R_{1,j}-1\right\}.
\]
\end{proof}

\begin{example}[Type 2]\label{ex:type2}
Let $k_1,\dots,k_n\in\mathbb N$ with $\gcd(k_1,\dots,k_n)=1$, and consider the matrix
\[
C=\begin{pmatrix}
k_1 & -k_2 & 0 & \cdots & 0\\
0 & k_2 & -k_3 & \cdots & 0\\
\vdots & & \ddots & \ddots & \vdots\\
0 & \cdots & 0 & k_{n-1} & -k_n\\
0 & \cdots & 0 & 0 & k_n
\end{pmatrix}.
\]
The associated domain is
\[
\Omega_C=\left\{(z_1,\dots,z_n)\in\mathbb C^n : |z_1|^{k_1}<|z_2|^{k_2}<\cdots<|z_n|^{k_n}<1\right\}.
\]
\end{example}

\noindent To simplify notation set
\[
K_{j,i}:=\frac{k_1k_2\cdots k_i}{k_j},\quad 1\le j\le i,\ 1\le i\le n,
\]
and for each $1\le i\le n$, let $m_i:=\gcd(K_{1,i},\dots,K_{i,i})$.

\begin{proposition}\label{prop:Sp-type2}
Let $\Omega_C$ be as in Example~\ref{ex:type2} and $1 \le p<\infty$. Then
\[
S_p(\Omega_C)=\left\{(\alpha_1,\dots,\alpha_n)\in\mathbb Z^n : \left(\sum_{j=1}^i K_{j,i}\alpha_j\right)p>-2\sum_{j=1}^i K_{j,i}\ \text{for all }1\le i\le n\right\}.
\]
The set of Threshold exponents is
\[
\mathrm{Threshold}(\Omega_C)=\left\{\frac{L_i}{\ell_i} : 1\le i\le n,\ 1\le \ell_i\le L_i-1\right\},
\]
where
\[
L_i:=\frac{2\sum_{j=1}^i K_{j,i}}{m_i}.
\]
\end{proposition}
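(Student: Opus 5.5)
The plan is to follow the same scheme as in Propositions~\ref{prop:Sp-OmegaA} and \ref{prop:Sp-type1}. First compute $\|e_\alpha\|_{p,\Omega_C}^p$ in closed form as an iterated integral in polar coordinates. Then read off the integrability conditions, and finally analyse for which $p$ these conditions change. By Reinhardt symmetry,
\[
\|e_\alpha\|^p_{p,\Omega_C}=(2\pi)^n\int_{\{0<r_1^{k_1}<\cdots<r_n^{k_n}<1\}} \prod_{j=1}^n r_j^{\alpha_jp+1}\,dr,
\]
with the region described by $0<r_n<1$ and $0<r_i<r_{i+1}^{k_{i+1}/k_i}$ for $i<n$. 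I will integrate successively in $r_1,r_2,\dots,r_n$. The integrand is nonnegative, so Tonelli applies and the integral is finite iff every one-variable integral $\int_0^{c} r^{\beta}\,dr$ that appears has $\beta>-1$.

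The key inductive claim is the following. After integrating out $r_1,\dots,r_{i}$, the result is a positive constant times $r_{i+1}^{\gamma_i k_{i+1}/(k_1\cdots k_i)}$, where
\[
\gamma_i:=\sum_{j=1}^i K_{j,i}(\alpha_jp+2)=\Bigl(\sum_{j=1}^iK_{j,i}\alpha_j\Bigr)p+2\sum_{j=1}^iK_{j,i},
\]
and the accumulated constant is $\prod_{l\le i}\frac{k_1\cdots k_{l-1}}{\gamma_l}$, valid provided $\gamma_1,\dots,\gamma_i>0$. For $i=1$ this is immediate. For the inductive step, multiply by $r_{i+1}^{\alpha_{i+1}p+1}$; the exponent plus one becomes
\[
\frac{\gamma_i k_{i+1}}{k_1\cdots k_i}+\alpha_{i+1}p+2=\frac{\gamma_ik_{i+1}+k_1\cdots k_i(\alpha_{i+1}p+2)}{k_1\cdots k_i}=\frac{\gamma_{i+1}}{k_1\cdots k_i}.
\]
This uses $K_{j,i+1}=k_{i+1}K_{j,i}$ for $j\le i$ and $K_{i+1,i+1}=k_1\cdots k_i$. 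So the $(i+1)$-st integral converges iff $\gamma_{i+1}>0$, and integrating it up to $r_{i+2}^{k_{i+2}/k_{i+1}}$ reproduces the claim. This yields
\[
\|e_\alpha\|^p_{p,\Omega_C}=\frac{(2\pi)^n\prod_{i=1}^n k_1\cdots k_{i-1}}{\prod_{i=1}^n\gamma_i}
\]
when all $\gamma_i>0$. If some $\gamma_i\le 0$, the corresponding integral diverges for every value of the remaining variables, so the norm is infinite. This gives the formula for $S_p(\Omega_C)$.

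For the thresholds, set $\lambda_i(\alpha):=\sum_jK_{j,i}\alpha_j$. The $i$-th condition reads $\lambda_i(\alpha)p>-2\sum_jK_{j,i}$. It can only switch as $p$ varies when $\lambda_i(\alpha)<0$, and then it switches exactly at $p=2\sum_jK_{j,i}/(-\lambda_i(\alpha))$. By Bezout, $\lambda_i(\mathbb Z^n)=m_i\mathbb Z$. Writing $-\lambda_i(\alpha)=m_i\ell$, the candidate switching values in the relevant range are $L_i/\ell$ with $1\le \ell\le L_i-1$. This candidate set is finite, so any $p$ outside it has a neighbourhood on which no condition changes for any $\alpha$, and hence $S_q(\Omega_C)$ is constant there.

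The step I expect to be the main obstacle is the converse: showing that each candidate $L_i/\ell$ really is a Threshold exponent. For this one needs an $\alpha$ with $\lambda_i(\alpha)=-m_i\ell$ for which all the other constraints $\lambda_{i'}(\alpha)p>-2\sum_jK_{j,i'}$, $i'\neq i$, hold strictly near $p=L_i/\ell$. Such an $\alpha$ lies in $S_q$ for $q$ slightly below $p$ but not in $S_p$. My plan is as follows. Start from a Bezout solution of $\lambda_i(\alpha)=-m_i\ell$ in the variables $\alpha_1,\dots,\alpha_i$. Then take $\alpha_{i+1},\dots,\alpha_n$ very large positive; this makes $\lambda_{i'}$ large for $i'>i$, since $K_{i',i'}>0$ multiplies $\alpha_{i'}$. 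To control the lower levels $i'<i$, I would modify $(\alpha_1,\dots,\alpha_i)$ by lattice vectors in $\ker\lambda_i$, for instance multiples of $K_{i,i}e_{i-1}-K_{i-1,i}e_i$ and analogous vectors, so as to push $\lambda_1,\dots,\lambda_{i-1}$ to large positive values. I would check inductively that this is possible, as in the two-variable case of Proposition~\ref{prop:Sp-OmegaA}.
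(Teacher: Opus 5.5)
Your proposal is correct. The computation of $\|e_\alpha\|^p_{p,\Omega_C}$ is the paper's: your $\gamma_i=\sum_{j\le i}K_{j,i}(\alpha_jp+2)$ is $k_1\cdots k_{i-1}$ times the paper's factor $\sum_{j\le i}\frac{k_i}{k_j}(\alpha_jp+2)$, so the closed forms agree. Your remark that the first $i$ with $\gamma_i\le 0$ makes the inner integral diverge for every value of the outer variables makes explicit what the paper leaves implicit.

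The real difference is in the thresholds. The paper asserts as ``clear'' that the Threshold exponents are exactly the $p$ at which some inequality becomes an equality, and only computes those $p$ via $m_i$. You justify the inclusion of the Threshold exponents in the candidate set by finiteness of that set. You also correctly flag the reverse inclusion, which the paper never checks: an $\alpha$ with $\lambda_i(\alpha)=-m_i\ell$ witnesses a jump at $L_i/\ell$ only if it satisfies the other $n-1$ constraints.

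Your kernel-vector idea closes this without any induction. The vector $w=K_{i,i}e_1-K_{1,i}e_i$ has $\lambda_i(w)=0$ and $\lambda_{i'}(w)=K_{1,i'}K_{i,i}>0$ for $i'<i$. By the recursion $\lambda_{i'+1}=k_{i'+1}\lambda_{i'}+K_{i'+1,i'+1}\alpha_{i'+1}$, it also has $\lambda_{i'}(w)=0$ for $i'>i$. So add $tw$ with $t\gg 1$ to a Bezout solution of $\lambda_i=-m_i\ell$, and then take $\alpha_{i+1},\dots,\alpha_n$ large; this gives $\lambda_{i'}(\alpha)\ge 0$ for all $i'\ne i$. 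Such an $\alpha$ lies in $S_q(\Omega_C)$ for all $q\in[1,L_i/\ell)$, an interval that is nonempty since $\ell\le L_i-1$, but not in $S_{L_i/\ell}(\Omega_C)$.

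Do state explicitly why $\ell=L_i$ is excluded: at $p=1$ the $i$-th condition fails for every $q\ge 1$, so nothing changes inside $[1,\infty)$.
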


\begin{proof}
Suppose $(\alpha_{1}, \alpha_{2}, \ldots, \alpha_{n}) \in S_{p}(\Omega_{C}) = \{\alpha \in \mathbb{Z}^{n} : e_{\alpha} \in L^{p}(\Omega_{C})\}$ and $p \in [1, +\infty)$. Consider
\begin{equation}\label{E:E3}
\norm{e_{\alpha}}^{p}_{p,\Omega_{C}} = \int_{\Omega_{C}}{\vert e_{\alpha}\vert^{p} dV} = \int_{\Omega_{C}}{\vert z_{1}\vert^{\alpha_{1} p} \vert z_{2}\vert^{\alpha_{2} p} \ldots \vert z_{n}\vert^{\alpha_{n} p} dV}.
\end{equation}
If $(z_{1},z_{2}, \ldots, z_{n}) \in \Omega_{C}$, then $0 <\vert z_{n}\vert < 1$, and for all $1 \le j \le n-1$, we have $0 < \vert z_{j}\vert < \vert z_{j+1}\vert^{\frac{k_{j+1}}{k_{j}}}$.
\noindent Putting this in Equation \ref{E:E3}, we get
\begin{align*}
\norm{e_{\alpha}}^{p}_{p,\Omega_{C}} &= (2\pi)^{n}\int_{0}^{1} \int_{0}^{r_{n}^{\frac{k_{n}}{k_{n-1}}}} \ldots \int_{0}^{r_{3}^{\frac{k_{3}}{k_{2}}}} \int_{0}^{r_{2}^{\frac{k_{2}}{k_{1}}}} r_{1}^{\alpha_{1} p +1} r_{2}^{\alpha_{2} p +1} \ldots r_{n-1}^{\alpha_{n-1} p +1} r_{n}^{\alpha_{n} p +1} dr_{1} dr_{2} \ldots dr_{n-1} dr_{n}\\
&= \frac{(2 \pi)^n}{(\alpha_{1}p + 2)} \int_{0}^{1} \int_{0}^{r_{n}^{\frac{k_{n}}{k_{n-1}}}} \ldots \int_{0}^{r_{3}^{\frac{k_{3}}{k_{2}}}}(r_{2}^{\frac{k_{2}}{k_{1}}})^{(\alpha_{1}p + 2)} (r_{2}^{\frac{k_{2}}{k_{2}}})^{(\alpha_{2}p +1)} \ldots r_{n}^{\alpha_{n} p +1} dr_{2} \ldots dr_{n}\\
&= \frac{(2 \pi)^n }{(\alpha_{1}p +2) (\frac{k_{2}}{k_{1}}(\alpha_{1}p+2) + \frac{k_{2}}{k_{2}}(\alpha_{2}p+2))}\\ 
& \left[\int_{0}^{1} \int_{0}^{r_{n}^{\frac{k_{n}}{k_{n-1}}}} \ldots \int_{0}^{r_{4}^{\frac{k_{4}}{k_{3}}}} (r_{3}^{\frac{k_{3}}{k_{2}}})^{(\frac{k_{2}}{k_{1}}(\alpha_{1}p+2) + \frac{k_{2}}{k_{2}}(\alpha_{2}p+2))} (r_{3}^{\frac{k_{3}}{k_{3}}})^{(\alpha_{3}p+1)} \ldots r_{n}^{\frac{k_{n} (\alpha_{1}p+2)}{k_{n}}} dr_{3} dr_{4} \ldots dr_{n-1} \ dr_{n}\right] \\
&= \frac{(2 \pi)^{n}}{\prod_{i=1}^{n}{\left[\sum_{j=1}^{i}{\frac{k_{i}}{k_{j}}(\alpha_{j}p+2)}\right]}}.
\end{align*}

\noindent Therefore, $\norm{e_{\alpha}}^{p}_{p,\Omega_{C}} < +\infty$ if and only if $\sum_{j=1}^{i}{\frac{k_{i}}{k_{j}}(\alpha_{j}p+2)} > 0$ for all $1 \le i \le n$. The latter condition can be rewritten as
\[
\sum_{j=1}^{i}{K_{j,i} (\alpha_{j}p +2)} > 0,
\]
where $1 \le i \le n$, $1 \le j \le i$, and $K_{j,i} = \frac{k_{1} k_{2} \ldots k_{i}}{k_{j}}$.
Thus, we have 
\[
S_p(\O_C)=\left\{(\a_1,\a_2, \cdots, \a_n)\in \Z^n: (\sum_{j=1}^{i}{K_{j,i} \alpha_{j}})p > -2\sum_{j=1}^{i}{K_{j,i}}, \ \text{for all} \ 1 \le i \le n, \ 1 \le j \le i \right\}.
\]

\noindent As we saw before, the Threshold exponents are the values of $p \in [1, +\infty)$ for which atleast one of the above inequalities becomes an equality for some index $(\alpha_{1}, \alpha_{2}, \ldots, \alpha_{n}) \in \mathbb{Z}^n$. Note that $K_{j,i} \in \mathbb{N}$, for all $1 \le j \le i$, and $1 \le i \le n$. For $1 \le i \le n$, define $m_{i} := \text{g.c.d}(K_{1,i}, K_{2,i}, \ldots, K_{i,i})$. Suppose $(\sum_{j=1}^{i}{K_{j,i} \alpha_{j}})p = -2\sum_{j=1}^{i}{K_{j,i}}$, for some $1 \le i \le n$, $p \ge 1$, and $(\alpha_{1}, \alpha_{2}, \ldots, \alpha_{n}) \in \mathbb{Z}^n$. Since $\sum_{j=1}^{i}{K_{j,i} \alpha_{j}} \in m_{i} \ \mathbb{Z}$, and $p \in [1, +\infty)$, we have
\[
\sum_{j=1}^{i}{K_{j,i} \alpha_{j}} \in \left\{- m_{i}, -2 m_{i}, \ldots, -\left( \frac{2\sum_{j=1}^{i}{K_{j,i}}}{m_{i}} - 1 \right) \ m_{i}\right\}.
\]
Therefore, we get
\[
p = \frac{-2\sum_{j=1}^{i}{K_{j,i}}}{\sum_{j=1}^{i}{K_{j,i} \alpha_{j}}} \in \left\{\frac{L_{i}}{l} : L_{i} = \frac{2\sum_{j=1}^{i}{K_{j,i}}}{m_{i}}, \ \text{and} \ 1 \le l \le L_{i}-1 \right\}.
\]
Thus, the set of Threshold exponents is given by
\[
\mathrm{Threshold}(\O_C) = \left\{\frac{L_{i}}{l_{i}} : L_{i} = \frac{2\sum_{j=1}^{i}{K_{j,i}}}{m_{i}}, 1 \le i \le n, \ \text{and} \ 1 \le l_{i} \le L_{i}-1\right\}.
\]
\end{proof}

\subsection{Hartogs triangles and irrational exponents}

Hartogs triangles provide another fundamental family of monomial polyhedra. For $\gamma\ge 1$ we set
\[
H_\gamma:=\{(z_1,z_2)\in\mathbb C^2: |z_1|<|z_2|^\gamma,\ |z_2|<1\}.
\]
{\cite{edholm2017bergman} studied the $L^p$ mapping properties of the Bergman projection on these Hartogs triangles and found the values of $p$ for which the Bergman projection is a bounded linear operator. Further they showed that, for irrational values of the parameter, the projection is bounded only on $L^{2}$.}
\begin{proposition}\label{prop:Sp-hartogs}
Let $\gamma\ge 1$ and $1 \le p<\infty$. Then
\[
S_p(H_\gamma)=\left\{(\alpha_1,\alpha_2)\in\mathbb Z^2:\ \alpha_1\ge 0,\ \gamma\alpha_1+\alpha_2> -2-\frac{2\gamma}{p}\right\}.
\]
If $\gamma$ is rational, then the set of Threshold exponents is finite. If $\gamma$ is irrational, then every $p\in[1,\infty)$ is a Threshold exponent, i.e.
\[
\mathrm{Threshold}(H_\gamma)=[1,\infty).
\]
\end{proposition}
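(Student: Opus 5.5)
The plan is to compute $\|e_\alpha\|_{p,H_\gamma}^p$ explicitly in polar coordinates, as in Proposition~\ref{prop:Sp-OmegaA}. I then read off $S_p(H_\gamma)$ and study the set of critical exponents where an inequality becomes an equality. With $r_1=|z_1|$, $r_2=|z_2|$, we have $(z_1,z_2)\in H_\gamma$ iff $0<r_2<1$ and $0\le r_1<r_2^\gamma$, so
\[
\|e_\alpha\|^p_{p,H_\gamma}=(2\pi)^2\int_0^1\int_0^{r_2^\gamma} r_1^{\alpha_1p+1}r_2^{\alpha_2p+1}\,dr_1\,dr_2 .
\]
The inner integral is finite iff $\alpha_1p+2>0$, in which case it equals $r_2^{\gamma(\alpha_1p+2)}/(\alpha_1p+2)$. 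The outer integral is then finite iff $\gamma(\alpha_1p+2)+\alpha_2p+2>0$. This gives the closed form
\[
\|e_\alpha\|^p_{p,H_\gamma}=\frac{(2\pi)^2}{(\alpha_1p+2)\bigl((\gamma\alpha_1+\alpha_2)p+2(1+\gamma)\bigr)}.
\]
Hence $e_\alpha\in A^p(H_\gamma)$ iff $\alpha_1p>-2$ and $(\gamma\alpha_1+\alpha_2)p>-2(1+\gamma)$. I would then rewrite this in the form displayed in the statement. For $p\ge 2$ the first condition is exactly $\alpha_1\ge 0$. For $1\le p<2$ the index $\alpha_1=-1$ also needs care, and I would check carefully whether the stated normalization $\alpha_1\ge0$ and the right-hand side $-2-2\gamma/p$ agree with the derived condition $\gamma\alpha_1+\alpha_2>-2(1+\gamma)/p$, correcting the bookkeeping if needed. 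The threshold analysis below uses only the derived second inequality, restricted to $\alpha_1\ge0$.

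\emph{Rational $\gamma$.} Write $\gamma=a/b$ with $\gcd(a,b)=1$. Multiplying by $b$, the second condition becomes $(a\alpha_1+b\alpha_2)p>-2(a+b)$, and the first condition contributes at most the single critical value $p=2$. Since $a\alpha_1+b\alpha_2$ ranges over all of $\mathbb Z$, the argument of Proposition~\ref{prop:Sp-OmegaA} applies verbatim: critical values satisfy $p=2(a+b)/l$ with $1\le l\le 2(a+b)$. This is a finite set. Away from this finite set, each $S_q$ is locally constant in $q$, because each index's membership can only change at a critical value. So the Threshold set is finite.

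\emph{Irrational $\gamma$.} This is the main step. The key input is Kronecker's theorem: $\{\gamma\alpha_1 \bmod 1:\alpha_1\in\mathbb N\}$ is dense in $[0,1]$. Consequently $G:=\{\gamma\alpha_1+\alpha_2:\alpha_1\ge0,\ \alpha_2\in\mathbb Z\}$ is dense in $\mathbb R$. The argument then proceeds in three steps.
\begin{enumerate}
\item For $m\in G$ with $-2(1+\gamma)\le m<0$, the exponent $q_0=-2(1+\gamma)/m\in[1,\infty)$ is critical. The corresponding index $\alpha$ (with $\alpha_1\ge0$, so the first condition always holds) satisfies $\alpha\in S_q(H_\gamma)$ iff $q<q_0$.
\item Since $m\mapsto -2(1+\gamma)/m$ is a homeomorphism from $[-2(1+\gamma),0)$ onto $[1,\infty)$, density of $G$ gives that these $q_0$ are dense in $[1,\infty)$.
\item Fix $p\ge1$ and $\varepsilon>0$. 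If $p$ itself is some $q_0$, take $q\in(p-\varepsilon,p)$; then $\alpha\in S_q\setminus S_p$. Otherwise choose $q_0\in(p-\varepsilon,p+\varepsilon)$ with $q_0\neq p$, and take $q$ on the opposite side of $q_0$ from $p$, still inside the interval. Then exactly one of $S_p$, $S_q$ contains $\alpha$.
\end{enumerate}
In every case $S_q(H_\gamma)\neq S_p(H_\gamma)$, so $p$ is a Threshold exponent and $\mathrm{Threshold}(H_\gamma)=[1,\infty)$. The only delicate point is producing $m\in G$ with $\alpha_1\ge0$ in a prescribed small interval, which Kronecker's density theorem handles directly.
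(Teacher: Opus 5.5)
Your polar-coordinate integral, the rational-case count and the irrational-case argument are all correct, and they follow the paper's route. For irrational $\gamma$ your argument is in fact a rigorous version of what the paper only sketches (``a more detailed analysis shows''): Kronecker density of $\{\gamma\alpha_1+\alpha_2:\alpha_1\ge0,\ \alpha_2\in\mathbb Z\}$, pushed through the increasing homeomorphism $m\mapsto-2(1+\gamma)/m$. In step 3 you can always take $q_0\in(p,p+\varepsilon)$ and $q=q_0$, which avoids going below $1$ when $p=1$.

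The gap is the step you defer with ``correcting the bookkeeping if needed''. That check comes out against the statement, and your proposal should say so.

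\begin{itemize}
\item Your condition $(\gamma\alpha_1+\alpha_2)p>-2(1+\gamma)$ is exactly the paper's intermediate condition $\gamma(p\alpha_1+2)+p\alpha_2>-2$. It says $\gamma\alpha_1+\alpha_2>-2(1+\gamma)/p$.
\item $-2(1+\gamma)/p=-2-2\gamma/p$ holds only for $p=1$, so the paper's assertion that the two conditions are equivalent is an error.
\item The index sets genuinely differ. For $\gamma=1$, $p=2$, the index $\alpha=(0,-2)$ satisfies the displayed inequality ($-2>-3$), yet $\int_{H_1}|z_2|^{-4}\,dV=2\pi^2\int_0^1 r_2^{-1}\,dr_2=\infty$.
\end{itemize}

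So the displayed formula cannot be proved as printed. What your argument establishes is $S_p(H_\gamma)=\{\alpha\in\mathbb Z^2:\ \alpha_1\ge0,\ \gamma\alpha_1+\alpha_2>-2(1+\gamma)/p\}$. For $\gamma=1$, $p=2$ this recovers the classical condition $\alpha_1+\alpha_2\ge-1$.

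The other loose end, $\alpha_1=-1$ when $1\le p<2$, is decided by holomorphy, not by the integral. The integral is indeed finite for such indices when the second inequality holds. But $(0,z_2)\in H_\gamma$ for $0<|z_2|<1$, so $z_1^{-1}z_2^{\alpha_2}$ is unbounded near points of $H_\gamma$. It is therefore not in $\mathcal O(H_\gamma)$, hence not in $A^p(H_\gamma)$. By contrast, $H_\gamma$ misses $\{z_2=0\}$, so every $\alpha_2\in\mathbb Z$ is admissible.

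This is the true source of $\alpha_1\ge0$ for all $p$; the paper also wrongly attributes it to the integral. It makes your first condition vacuous, so the extra critical value $p=2$ you attach to it in the rational case is unnecessary, though harmless. Neither correction affects the Threshold conclusions, since those use only the second inequality with $\alpha_1\ge0$.
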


\begin{proof}
A direct computation of the integral
\[
\int_{H_\gamma}|z_1|^{p\alpha_1}|z_2|^{p\alpha_2}\,dV(z_1,z_2)
\]
in polar coordinates shows that
\[
\|e_\alpha\|_{L^p(H_\gamma)}^p<\infty
\]
if and only if $\alpha_1\ge 0$ and
\[
\gamma(p\alpha_1+2)+p\alpha_2>-2,
\]
which is equivalent to the stated inequality.\\

\noindent If $\gamma$ is rational, say $\gamma=a/b$ with coprime integers, then the condition that equality holds for some integer pair $(\alpha_1,\alpha_2)$ leads to a finite set of rational values of $p$, obtained by solving linear Diophantine equations with bounded coefficients.\\

\noindent If $\gamma$ is irrational, then the line in the $(\alpha_1,\alpha_2)$–plane corresponding to equality can be made to pass through integer lattice points for infinitely many choices of $(\alpha_1,\alpha_2)$, and a more detailed analysis shows that for each $p$ there exists an index where the inequality becomes arbitrarily close to equality. This implies that for every $p$ there is a sequence of indices $\alpha$ for which $e_\alpha$ is barely integrable, and hence $p$ is a Threshold exponent.
\end{proof}

\begin{remark}
In all the examples discussed so far, $p=2$ is always a Threshold exponent. In fact the duality index (\cite[Sec 3]{bhat2024duality}) for these monomial polyhedra equals $2$, and the regularity index coincides with the smallest Threshold exponent strictly larger than $2$.
\end{remark}

\section{Structural properties of \texorpdfstring{$S_p(\Omega)$}{Sp}}
\label{sec:Sp-properties}

The explicit computations of the previous section can be used to test general expectations about the behaviour of $S_p(\Omega)$ under basic set-theoretic operations on domains. 



\begin{proposition}\label{prop:union-intersection-product}
For Reinhardt domains $\Omega_1,\Omega_2\subset\mathbb C^n$ and $1\le p<\infty$ the following hold:
\begin{enumerate}[(1)]
\item $S_p(\Omega_1\cup\Omega_2)=S_p(\Omega_1)\cap S_p(\Omega_2)$.
\item $S_p(\Omega_1)\cup S_p(\Omega_2)\subset S_p(\Omega_1\cap\Omega_2)$.
\item If $\Omega_1\subset\mathbb C^{n_1}$ and $\Omega_2\subset\mathbb C^{n_2}$, then
\[
S_p(\Omega_1\times\Omega_2)=S_p(\Omega_1)\times S_p(\Omega_2).
\]
\end{enumerate}
\end{proposition}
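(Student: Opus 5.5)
The plan is to reduce each item to an elementary statement about the integral $\int |e_\alpha|^p\,dV$, with a separate check that holomorphy is inherited by the relevant domain. Throughout, $e_\alpha$ is identified with the Laurent monomial $z^\alpha$. It is holomorphic on an open set $U$ exactly when $U$ avoids the hyperplanes $\{z_j=0\}$ for which $\alpha_j<0$, and this is a pointwise condition. In (1) and (2) I will tacitly assume, as the statement requires, that $\Omega_1\cup\Omega_2$ and $\Omega_1\cap\Omega_2$ are domains; in particular the intersection is nonempty and connected, and otherwise the sets $S_p$ are not defined in the sense of Definition \ref{D:p-allowable indices}.

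For (1) I will argue as follows. Since $e_\alpha$ is holomorphic on $\Omega_1\cup\Omega_2$ if and only if it is holomorphic on both $\Omega_1$ and $\Omega_2$, the pointwise nature of the condition gives the holomorphy part immediately. For the integrals I will use the two-sided estimate
\[
\max_{i=1,2}\int_{\Omega_i}|e_\alpha|^p\,dV\;\le\;\int_{\Omega_1\cup\Omega_2}|e_\alpha|^p\,dV\;\le\;\int_{\Omega_1}|e_\alpha|^p\,dV+\int_{\Omega_2}|e_\alpha|^p\,dV .
\]
The left inequality shows that integrability on the union forces integrability on each piece, and the right inequality gives the converse. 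Together these yield $S_p(\Omega_1\cup\Omega_2)=S_p(\Omega_1)\cap S_p(\Omega_2)$.

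For (2) I will take $\alpha\in S_p(\Omega_1)$, the case $\alpha\in S_p(\Omega_2)$ being symmetric. Then $e_\alpha$ is holomorphic on $\Omega_1\supset\Omega_1\cap\Omega_2$. Monotonicity of the integral of the nonnegative function $|e_\alpha|^p$ over the smaller set gives $\|e_\alpha\|_{p,\Omega_1\cap\Omega_2}\le\|e_\alpha\|_{p,\Omega_1}<\infty$. I will also note in passing why only an inclusion holds: a monomial can fail to be integrable on each $\Omega_i$ for different reasons, one near the origin and the other near infinity, and yet be integrable on the intersection.

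For (3) I will write $\alpha=(\beta,\gamma)\in\mathbb Z^{n_1}\times\mathbb Z^{n_2}$, so that $e_\alpha(z,w)=e_\beta(z)e_\gamma(w)$. The holomorphy part follows because the bad hyperplanes split coordinatewise: $\Omega_1\times\Omega_2$ avoids $\{z_j=0\}$ or $\{w_k=0\}$ exactly when the corresponding factor avoids it. By Tonelli's theorem applied to the nonnegative integrand,
\[
\int_{\Omega_1\times\Omega_2}|e_\alpha|^p\,dV=\Bigl(\int_{\Omega_1}|e_\beta|^p\,dV\Bigr)\Bigl(\int_{\Omega_2}|e_\gamma|^p\,dV\Bigr).
\]
The only point needing care, and the mild obstacle in the whole argument, is excluding the indeterminate case $0\cdot\infty$. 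Each factor is strictly positive, because $|e_\beta|^p>0$ off a null set of coordinate hyperplanes and $\Omega_i$ is a nonempty open set of positive measure. Hence the product is finite if and only if both factors are finite, which gives $S_p(\Omega_1\times\Omega_2)=S_p(\Omega_1)\times S_p(\Omega_2)$.
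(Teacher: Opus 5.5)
Your proof is correct and follows the paper's own argument: the two-sided integral estimate for the union, monotonicity of the integral for the intersection, and the Fubini--Tonelli product formula for the product. Your extra checks make explicit two points the paper leaves tacit: holomorphy of $e_\alpha$ is a pointwise condition (the domain must avoid $\{z_j=0\}$ whenever $\alpha_j<0$) and is inherited in each case, and both factors in (3) are strictly positive, so no $0\cdot\infty$ ambiguity arises.
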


\begin{proof}
For (1), note that $\Omega_1\cup\Omega_2$ is a domain and for any measurable $f$ one has
\[
\int_{\Omega_1}|f|\le \int_{\Omega_1\cup\Omega_2}|f|\le \int_{\Omega_1}|f|+\int_{\Omega_2}|f|.
\]
Thus $e_\alpha\in L^p(\Omega_1\cup\Omega_2)$ if and only if $e_\alpha\in L^p(\Omega_1)$ and $e_\alpha\in L^p(\Omega_2)$, yielding the equality of the index sets.

For (2), the inclusion $\Omega_1\cap\Omega_2\subset\Omega_j$ for $j=1,2$ implies
\[
\int_{\Omega_1\cap\Omega_2}|e_\alpha|^p\le \int_{\Omega_j}|e_\alpha|^p,
\]
hence any index that is allowable on either $\Omega_1$ or $\Omega_2$ remains allowable on the intersection.

For (3), use Fubini's theorem. If $\alpha\in S_p(\Omega_1)$ and $\beta\in S_p(\Omega_2)$, then
\[
\int_{\Omega_1\times\Omega_2}|z^\alpha|^p|w^\beta|^p\,dV(z,w)=\left(\int_{\Omega_1}|z^\alpha|^p\,dV(z)\right)\left(\int_{\Omega_2}|w^\beta|^p\,dV(w)\right)<\infty,
\]
so $(\alpha,\beta)\in S_p(\Omega_1\times\Omega_2)$. Conversely, if $(\alpha,\beta)\in S_p(\Omega_1\times\Omega_2)$, then the same formula shows that both factors must be finite, so $\alpha\in S_p(\Omega_1)$ and $\beta\in S_p(\Omega_2)$.
\end{proof}

Part~(2) of Proposition \ref{prop:union-intersection-product} need not be an equality in general. This can be seen concretely from the next example.

\begin{example}
    Let $\Omega_{1} = \{(z_{1}, z_{2}) \in \mathbb{C}^2 : \abs{z_1}^{\gamma_1} < \abs{z_{2}} <1\}$ and $\Omega_{2} = \{(z_{1}, z_{2}) \in \mathbb{C}^2 : \abs{z_2}^{\gamma_2} < \abs{z_{1}} <1\}$, where $\gamma_{1}, \gamma_{2} > 1$. From Example \ref{prop:Sp-hartogs}, we have for $p \ge 1$
    \[
    S_{p}(\Omega_1) = \left\{(\alpha_{1}, \alpha_{2}) \in \mathbb{Z}^2 : \alpha_{1} \ge 0, \ \alpha_{1} + \gamma_{1} \alpha_{2} > \frac{-2(\gamma_{1}+1)}{p}\right\},
    \]
    \[
     S_{p}(\Omega_2) = \left\{(\alpha_{1}, \alpha_{2}) \in \mathbb{Z}^2 : \gamma_{2} \alpha_{1} + \alpha_{2} > \frac{-2(1 +\gamma_{2})}{p}, \ \alpha_{2} \ge 0\right\}.
    \]
    Note that $\Omega_{1} \cap \Omega_{2} = \{(z_{1}, z_{2}) \in \mathbb{C}^2 : \abs{z_1}^{\gamma_1} \abs{z_{2}}^{-1} <1, \ \abs{z_1}^{-1} \abs{z_{2}}^{\gamma_{2}} < 1\} $ is a {Reinhardt domain} of the form $\Omega_{A}$ corresponding to the matrix  $A=\begin{bmatrix}
\gamma_{1} & -1 \\
-1 & \gamma_{2} 
\end{bmatrix}$ (look at Example \ref{ex:OmegaA-2D}). Now Proposition \ref{prop:Sp-OmegaA} gives 
\[
S_{p}(\Omega_{1} \cap \Omega_{2}) = \left\{(\alpha_{1}, \alpha_{2}) \in \mathbb{Z}^2 : \alpha_{1} + \gamma_{1} \alpha_{2} > \frac{-2(\gamma_{1}+1)}{p}, \ \gamma_{2} \alpha_{1} + \alpha_{2} > \frac{-2(1 +\gamma_{2})}{p}\right\}.
\]
It is clear that $S_{p}(\Omega_{1}) \cup S_{p}(\Omega_{2}) \subset \{(\alpha_{1}, \alpha_{2}) \in \mathbb{Z}^2 : \alpha_{1} \ge 0\} \cup \{(\alpha_{1}, \alpha_{2}) \in \mathbb{Z}^2 : \alpha_{2} \ge 0\}$. Therefore, $S_{p}(\Omega_{1} \cap \Omega_{2}) \setminus (S_{p}(\Omega_{1}) \cup S_{p}(\Omega_{2})) \subset \{(\alpha_{1}, \alpha_{2}) \in \mathbb{Z}^2 : \alpha_{1} < 0, \ \alpha_{2} < 0\}$. In fact, we claim that 
\[
S_{p}(\Omega_{1} \cap \Omega_{2}) \setminus (S_{p}(\Omega_{1}) \cup S_{p}(\Omega_{2})) \subset \{(\alpha_{1}, \alpha_{2}) \in \mathbb{Z}^2 : -2 \le \alpha_{1} < 0, \ -2 \le \alpha_{2} < 0\}.
\]
To see this, suppose $\alpha_{1} \le -3$ and $\alpha_{2} \le -1$, then consider
\[
p(-\alpha_{1} \gamma_{2} - \alpha_{2}) > -\alpha_{1} \gamma_{2} - \alpha_{2} \ge 3\gamma_{2} + 1 = 2 \gamma_{2} + (\gamma_{2} +1) > 2 (\gamma_{2} +1).
\]
In other words, $\gamma_{2} \alpha_{1} + \alpha_{2} < \frac{-2(1+ \gamma_{2})}{p}$, which in turn gives us that $(\alpha_{1}, \alpha_{2}) \notin S_{p}(\Omega_{1} \cap \Omega_{2})$ if $\alpha_{1} \le -3$ and $\alpha_{2} <0$. Similarly, we can show it for the other case where $\alpha_{2} \le -3$.

\noindent Without loss of generality suppose $\gamma_{1} \ge \gamma_{2}$. Therefore, we have $\frac{2\gamma_{1} +2}{2\gamma_{1} +1} \ge \frac{2\gamma_{2} +2}{2\gamma_{2} +1}$.
\begin{enumerate}
    \item Let $p \ge 2$. Suppose if possible $(\alpha_{1}, \alpha_{2}) \in S_{p}(\Omega_{1} \cap \Omega_{2}) \setminus S_{p}(\Omega_{1}) \cup S_{p}(\Omega_{2})$, then $\alpha_{1}, \alpha_{2} < 0$. Therefore $(\alpha_{1}+1), (\alpha_{2} +1) \le 0$, and
    \begin{equation}
        (\alpha_{1} +1) + (\alpha_{2}+1)\gamma_{1} \le 0.
    \end{equation}
    Also, since $(\alpha_{1}, \alpha_{2}) \in S_{p}(\Omega_{1} \cap \Omega_{2})$ and $p \ge 2$, we have
    \[
    \alpha_{1} + \gamma_{1} \alpha_{2} > \frac{-2(\gamma_{1}+1)}{p} \ge -(\gamma_{1} + 1),
    \]
    which gives us
    \begin{equation}
        (\alpha_{1} +1) + (\alpha_{2}+1)\gamma_{1} > 0.
    \end{equation}
    This is a contradiction, and we get $S_{p}(\Omega_{1} \cap \Omega_{2}) = S_{p}(\Omega_{1}) \cup S_{p}(\Omega_{2})$, for all $p \ge 2$.

    \item Let $1 \le p < 2$. Here we find condition for $(\alpha_{1}, \alpha_{2}) \in S_{p}(\Omega_{1} \cap \Omega_{2})$, where $(\alpha_{1}, \alpha_{2}) \in \{(-1,-1), (-2,-1), (-1,-2),(-2,-2)\}$.
    \begin{enumerate}
        \item Suppose $(-1,-1) \in S_{p}(\Omega_{1} \cap \Omega_{2})$, then by definition
        \[
        -1-\gamma_{1} > \frac{-2(1+\gamma_{1})}{p}, \ \& \ -\gamma_{2}-1 > \frac{-2(1+\gamma_{2})}{p},
        \]
        which translates to $p < 2$. For $1 \le p < 2$, it is easy to check that $(\alpha_{1}, \alpha_{2}) = (-1,-1)$ satisfy
        \[
        \alpha_{1} + \gamma_{1} \alpha_{2} > \frac{-2(\gamma_{1}+1)}{p}, \ \& \ \ \gamma_{2} \alpha_{1} + \alpha_{2} > \frac{-2(1 +\gamma_{2})}{p},
        \]
        and therefore $(-1,-1) \in S_{p}(\Omega_{1} \cap \Omega_{2})$. Thus $(-1,-1) \in S_{p}(\Omega_{1} \cap \Omega_{2})$ if and only if $1 \le p < 2$.

        \item Suppose $(-2,-1) \in S_{p}(\Omega_{1} \cap \Omega_{2})$, then
        \[
        -2-\gamma_{1} > \frac{-2(1+\gamma_{1})}{p}, \ \& \ -2\gamma_{2}-1 > \frac{-2(1+\gamma_{2})}{p},
        \]
        which translates to $p < \text{min}\left\{\frac{2+2\gamma_{1}}{2+\gamma_{1}}, \frac{2+2\gamma_{2}}{2\gamma_{2}+1}\right\} = \text{min}\left\{\frac{2+2\gamma_{1}}{2+\gamma_{1}}, \frac{2+2\gamma_{2}^{-1}}{2+\gamma_{2}^{-1}}\right\}$. Note that $f(t) = \frac{2+2t}{2+t}$ is a strictly increasing function on $(0, +\infty)$, and $0 < \gamma_{2}^{-1} < 1 < \gamma_{1}$, therefore $\frac{2+2\gamma_{1}}{2+\gamma_{1}} > \frac{2+2\gamma_{2}^{-1}}{2+\gamma_{2}^{-1}}$. So, if $(-2,-1) \in S_{p}(\Omega_{1} \cap \Omega_{2})$, then $p < \frac{2+2\gamma_{2}}{2\gamma_{2}+1}$. Retracing the steps backward gives us $(-2,-1) \in S_{p}(\Omega_{1} \cap \Omega_{2})$ if and only if $1 \le p < \frac{2+2\gamma_{2}}{2\gamma_{2}+1}$.

        \item Suppose $(-1,-2) \in S_{p}(\Omega_{1} \cap \Omega_{2})$, then
        \[
        -1-2\gamma_{1} > \frac{-2(1+\gamma_{1})}{p}, \ \& \ -\gamma_{2}-2 > \frac{-2(1+\gamma_{2})}{p},
        \]
        which translates to $p < \text{min}\left\{\frac{2+2\gamma_{1}}{2\gamma_{1} +1}, \frac{2+2\gamma_{2}}{2 + \gamma_{2}}\right\} = \text{min}\left\{\frac{2+2\gamma_{1}^{-1}}{2+\gamma_{1}^{-1}}, \frac{2+2\gamma_{2}}{2+\gamma_{2}}\right\}$. Since $0 < \gamma_{1}^{-1} < 1 < \gamma_{2}$, therefore $\frac{2+2\gamma_{2}}{2+\gamma_{2}} > \frac{2+2\gamma_{1}^{-1}}{2+\gamma_{1}^{-1}}$. So, if $(-1,-2) \in S_{p}(\Omega_{1} \cap \Omega_{2})$, then $p < \frac{2+2\gamma_{1}}{2\gamma_{1}+1}$. Retracing the steps backward gives us $(-1,-2) \in S_{p}(\Omega_{1} \cap \Omega_{2})$ if and only if $1 \le p < \frac{2+2\gamma_{1}}{2\gamma_{1}+1}$.

        \item Suppose $(-2,-2) \in S_{p}(\Omega_{1} \cap \Omega_{2})$, then
        \[
        -2-2\gamma_{1} > \frac{-2(1+\gamma_{1})}{p}, \ \& \ -2\gamma_{2}-2 > \frac{-2(1+\gamma_{2})}{p},
        \]
        which translates to $p < 1$. Therefore $(-2,-2) \notin S_{p}(\Omega_{1} \cap \Omega_{2})$ for $p \ge 1$. 
    \end{enumerate}
\end{enumerate}
Using the above calculations and the fact that $\frac{2\gamma_{1} +2}{2\gamma_{1} +1} \ge \frac{2\gamma_{2} +2}{2\gamma_{2} +1}$, we get
\[
S_{p}(\Omega_{1} \cap \Omega_{2}) \setminus (S_{p}(\Omega_{1}) \cup S_{p}(\Omega_{2}))= \begin{cases}\emptyset ,& p \ge 2 \\
\{(-1,-1)\} ,& \frac{2\gamma_{1} +2}{2\gamma_{1}+1} \le p < 2 \\
\{(-1,-1), (-1,-2)\} ,& \frac{2\gamma_{2} +2}{2\gamma_{2}+1} \le p < \frac{2\gamma_{1} +2}{2\gamma_{1}+1} \\
\{(-1,-1), (-1,-2), (-2,-1)\} ,& 1 \le p < \frac{2\gamma_{2} +2}{2\gamma_{2}+1} \\\end{cases}\\.
\]
\end{example}

\begin{remark}
We notice that the observations in Proposition \ref{prop:union-intersection-product} does not translate to countable unions. For instance, if  $\O_N=\D\times B(0, N)$, then $S_p(\O_N)=\N_0\times \N_0$ and $\displaystyle{S_p(\cup_{N=1}^\infty \O_N) = S_p(\D\times \C)= \emptyset }$. However, we can get a Ramadanov type result for countable unions as seen in next result. Further, we see that $S_p(\O)$ captures the geometry of the domain $\O$ near $0$ and $\infty$.    
\end{remark}

The $p$–allowable index set $S_p(\Omega)$ is largely determined by the behaviour of $\Omega$ near the origin and at infinity. This can be made precise as follows.

\begin{theorem}\label{thm:localization}
Let $\Omega,\Omega'\subset\mathbb C^n$ be Reinhardt domains. If there exists $0 < r < 1$ such that
\[
\Omega\cap B(0,r)=\Omega'\cap B(0,r)\quad\text{and}\quad \Omega\cap B(\infty,1/r)=\Omega'\cap B(\infty,1/r),
\]
where $B(\infty,1/r)$ denotes the complement of the closed ball $\overline{B(0,1/r)}$. Then
\[
S_p(\Omega)=S_p(\Omega')
\]
for every $1\le p<\infty$.
\end{theorem}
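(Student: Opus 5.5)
By symmetry it suffices to show $S_p(\Omega)\subset S_p(\Omega')$. Fix $\alpha\in S_p(\Omega)$. We will show $\int_{\Omega'}|e_\alpha|^p\,dV<\infty$ by splitting $\Omega'$ into three pieces:
\[
\Omega'\cap B(0,r),\qquad \Omega'\cap B(\infty,1/r),\qquad \Omega'\cap\bigl(\overline{B(0,1/r)}\setminus B(0,r)\bigr).
\]
On the first two pieces $\Omega'$ agrees with $\Omega$ by hypothesis. Hence those two contributions are bounded by $\int_\Omega|e_\alpha|^p\,dV<\infty$.

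The third piece $\Omega'\cap\{r\le |z|\le 1/r\}$ is bounded, so the only issue is the possible singularity of $e_\alpha$ when some $\alpha_j<0$ and $z_j\to 0$. Here Reinhardt geometry alone is not enough; we use the fact that $e_\alpha$ is a holomorphic function on $\Omega$. Note that the theorem implicitly treats $e_\alpha$ as holomorphic on the domain, i.e.\ it is in $A^p$. Concretely:
\begin{itemize}
\item If $\Omega$ or $\Omega'$ meets a coordinate hyperplane $\{z_j=0\}$, then $\alpha_j\ge 0$ is forced for $\alpha$ to be allowable on that domain, since a Laurent monomial with $\alpha_j<0$ is not holomorphic there.
\item If $\alpha_j<0$ for some $j$, we need $\overline{\Omega'\cap\{r\le|z|\le 1/r\}}$ to avoid $\{z_j=0\}$, or at least that $|z_j|^{p\alpha_j}$ is integrable there.
\end{itemize}

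My plan for this middle step is as follows. Take $\alpha\in S_p(\Omega)$ with $\alpha_j<0$. Since $e_\alpha\in\mathcal O(\Omega)$, $\Omega$ does not meet $\{z_j=0\}$. I then want the same for $\Omega'$. If $\Omega'$ contained a point with $z_j=0$, then by connectedness and the Reinhardt property one would like to transport this to the region near the origin or near infinity, where $\Omega'=\Omega$, giving a contradiction.

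This transport is the main obstacle, and it may genuinely fail. An annulus-type region could meet $\{z_j=0\}$ only for $r<|z|<1/r$, for example. So I would first try to prove the weaker but sufficient statement directly: $\int_{\Omega'\cap\{r\le|z|\le1/r\}}|e_\alpha|^p\,dV<\infty$. The route is to write this integral in polar coordinates on the shadow $|\Omega'|$ and compare it with the integral over the part of $|\Omega|$ lying on the sphere $|z|=r$, using that Lebesgue-measure integrability of $|r_j|^{p\alpha_j}r_j$ near $r_j=0$ depends only on the exponent.

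If this comparison cannot be made rigorous, I would fall back on one of two routes. The first is to add the natural implicit hypothesis that the monomials in question are holomorphic on both domains, i.e.\ that $\Omega\cap\{z_j=0\}$ and $\Omega'\cap\{z_j=0\}$ are simultaneously empty. Under that hypothesis $|e_\alpha|^p$ is continuous on the compact closure of the middle piece, so its integral there is finite. The second is the integrability claim above: the middle contribution is finite because $p\alpha_j>-2$, which holds whenever $\alpha_j\ge-1$ and $p<2$, or whenever $\alpha_j\ge 0$.

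With the middle piece handled, adding the three contributions gives $e_\alpha\in L^p(\Omega')$, and hence $\alpha\in S_p(\Omega')$.
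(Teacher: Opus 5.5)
Your middle step is where the argument breaks, and your suspicion that it ``may genuinely fail'' is correct. The statement is false as written, so neither your comparison on the shell $\{r\le|z|\le 1/r\}$ nor any other argument can close it. Take $n=2$, $r=1/4$, and
\[
\Omega=\{1/2<|z_1|<1,\ 1/2<|z_2|<1\},\qquad \Omega'=\{0<|z_1|<1,\ 1/2<|z_2|<1\}.
\]
Both are bounded pseudoconvex Reinhardt domains with $1/2<|z|<\sqrt{2}$. So both miss $B(0,1/4)$ and $B(\infty,4)$, and the hypotheses hold. Neither meets a coordinate hyperplane, so every $e_\alpha$ is holomorphic on both. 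Every monomial is bounded on $\Omega$, so $S_p(\Omega)=\mathbb{Z}^2$. On the other hand, $\|e_\alpha\|^p_{p,\Omega'}=(2\pi)^2\int_0^1 t^{p\alpha_1+1}\,dt\int_{1/2}^1 t^{p\alpha_2+1}\,dt$ gives $S_p(\Omega')=\{\alpha\in\mathbb{Z}^2: p\alpha_1>-2\}$, which omits $(-3,0)$ for every $p\ge 1$. Using $\{|z_1|<1,\ 1/2<|z_2|<1\}$ instead realizes the scenario you anticipated: a domain meeting $\{z_1=0\}$ only inside the shell, with $S_p=\{\alpha_1\ge 0\}$.

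The same example shows your first fallback hypothesis is too weak. Both domains avoid $\{z_j=0\}$, but the \emph{closure} of $\Omega'\cap\{r\le|z|\le 1/r\}$ touches $\{z_1=0\}$. So $|e_\alpha|^p$ is unbounded there when $\alpha_1<0$ and need not be integrable. What makes your three-piece argument work is the stronger assumption that $\inf|z_j|>0$ on $\Omega\cap\{r\le|z|\le 1/r\}$ and on $\Omega'\cap\{r\le|z|\le 1/r\}$ for each $j$. Then $|e_\alpha|^p$ is bounded on both shells for every $\alpha\in\mathbb{Z}^n$. Moreover, any contact with a coordinate hyperplane then occurs where the domains coincide, so $e_\alpha$ is holomorphic on $\Omega$ if and only if it is on $\Omega'$.

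Your second fallback ($p\alpha_j>-2$) covers only indices for which $|e_\alpha|^p$ is locally integrable on all of $\mathbb{C}^n$. It says nothing about the remaining indices, which is exactly where the example disagrees, and it ignores holomorphy. Finally, the paper's proof uses the same decomposition and simply asserts that $|e_\alpha|^p$ is bounded on $\Omega\cap A(0;r,1/r)$. That claim fails for $\alpha$ with negative entries in the example above, so the hole you isolated is a hole in the paper's argument as well.
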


\begin{proof} 
The integrability of $|e_\alpha|^p$ near the origin depends only on the local behaviour of the domain in a neighbourhood of the origin, while integrability at infinity (in unbounded domains) depends only on the complement of a large ball. To see this, note that 
\begin{align*}
    \int_\O \abs{e_\a}^p &=\int_{\O \cap B(0,r) } \abs{e_\a}^p +\int_{\O \cap A(0; r,1/r) } \abs{e_\a}^p +\int_{\O \cap B(\infty,1/r) } \abs{e_\a}^p \\
    &=\int_{\O' \cap B(0,r) } \abs{e_\a}^p +\int_{\O \cap A(0; r,1/r) } \abs{e_\a}^p +\int_{\O' \cap B(\infty,1/r) } \abs{e_\a}^p\\
    &=\int_{\O'} \abs{e_\a}^p + \left(\int_{\O \cap A(0; r,1/r) } \abs{e_\a}^p -\int_{\O' \cap A(0; r,1/r) } \abs{e_\a}^p\right)
\end{align*}
Since the monomial ${e_\a}^p$ is bounded on a bounded set of the form 
$\O \cap A(0; r,1/r) $ or $\O' \cap A(0; r,1/r) $, the term in the parentheses on the right side is a finite quantity. Thus $\a \in S_p(\O)$ if and only if $\a \in S_p(\O')$
\end{proof}

Finally, we record a Ramadanov-type convergence result for $K_p$, paralleling the classical theorem for Bergman kernels.

\begin{theorem}\label{thm:ramadanov}
Let $\{\Omega_j\}_{j=1}^\infty$ be an increasing sequence of pseudoconvex Reinhardt domains in $\mathbb C^n$, and let
\[
\Omega_\infty:=\bigcup_{j=1}^\infty\Omega_j.
\]
Then $\Omega_\infty$ is a {pseudoconvex} Reinhardt domain and, for each $1\le p<\infty$, the $p$-monomial basis kernels satisfy
\[
K_p^{\Omega_j}(z,w)\longrightarrow K_p^{\Omega_\infty}(z,w)
\]
uniformly on compact subsets of $\Omega_\infty\times\Omega_\infty$.
\end{theorem}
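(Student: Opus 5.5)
The plan is to prove convergence term by term in the series~\eqref{def:p-MBK} and then pass to the limit using a summable majorant that does not depend on $j$, in the spirit of Lemma~\ref{lem:domination}. Monotonicity in $j$ should make the uniformity easy.

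First I dispose of the geometric claims. $\Omega_\infty$ is open and invariant under the torus action. It is connected because it is an increasing union of connected sets. It is pseudoconvex because an increasing union of pseudoconvex domains is pseudoconvex (Behnke--Stein). Equivalently, $\log|\Omega_\infty|$ is an increasing union of convex sets, hence convex, and the union is relatively complete whenever each $\Omega_j$ is. So $K_p^{\Omega_\infty}$ is defined.

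Next, fix $\alpha\in\mathbb Z^n$. Since $\Omega_j\uparrow\Omega_\infty$, the monotone convergence theorem gives $\|e_\alpha\|_{p,\Omega_j}^p\uparrow\|e_\alpha\|_{p,\Omega_\infty}^p$. In particular the sets $S_p(\Omega_j)$ decrease in $j$ and contain $S_p(\Omega_\infty)$. I then split into three cases.
\begin{enumerate}[(i)]
\item If $\alpha\in S_p(\Omega_\infty)$, the denominators converge to a finite positive limit, so $E^{\Omega_j}_{\alpha,p}(z,w)\to E^{\Omega_\infty}_{\alpha,p}(z,w)$.
\item If $\alpha\notin S_p(\Omega_j)$ for some $j$, then $E^{\Omega_i}_{\alpha,p}=0$ for all $i\ge j$, which agrees with $E^{\Omega_\infty}_{\alpha,p}=0$.
\item If $\alpha\in S_p(\Omega_j)$ for all $j$ but $\alpha\notin S_p(\Omega_\infty)$, the norms tend to $\infty$, so the terms tend to $0$. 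This mirrors Case~2 of Lemma~\ref{lem:pointwise}.
\end{enumerate}
In each case the convergence is uniform on compact sets, since the numerator is a fixed continuous function.

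The key step is domination. Let $L\Subset\Omega_\infty$ be compact; enlarging it, I may assume $L$ is Reinhardt. By compactness $L\subset\Omega_{j_0}$ for some $j_0$. For $j\ge j_0$ and $z,w\in L$, as in the proof of Lemma~\ref{lem:domination} (with $q=p$ fixed), I write $|E^{\Omega_j}_{\alpha,p}(z,w)|=|e_\alpha(t)|^p/\|e_\alpha\|^p_{p,\Omega_j}$. Here $t$ lies in the compact set $|L|^{1/p}\cdot|L|^{(p-1)/p}\times\mathbb T^n$, which is contained in $\Omega_{j_0}$ by log-convexity of $|\Omega_{j_0}|$. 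Because $\|e_\alpha\|_{p,\Omega_j}\ge\|e_\alpha\|_{p,\Omega_{j_0}}$, this is at most $|e_\alpha(t)|^p/\|e_\alpha\|^p_{p,\Omega_{j_0}}$. Remark~\ref{rem:CompactSet} together with the Bergman mean-value inequality on $\Omega_{j_0}$ then bounds this by $C\theta^{|\alpha|}$, with $C$ and $\theta\in(0,1)$ depending only on $L$ and $\Omega_{j_0}$. The same bound applies to the terms of $K_p^{\Omega_\infty}$. Since $\sum_\alpha\theta^{|\alpha|}<\infty$, the Weierstrass M-test (dominated convergence on $\mathbb Z^n$, uniformly in $(z,w)\in L\times L$) combined with the termwise convergence yields uniform convergence of $K_p^{\Omega_j}\to K_p^{\Omega_\infty}$ on $L\times L$.

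I expect the main obstacle to be the domination step. Specifically, one must check that the estimate of Lemma~\ref{lem:domination} truly produces a bound $C\theta^{|\alpha|}$ depending only on $\Omega_{j_0}$. This means verifying that the combination of Remark~\ref{rem:CompactSet} with $|e_\alpha(t)|^p\le C_{K''}\|e_\alpha\|^p_{p,\Omega_{j_0}}$ is legitimate on the fixed domain $\Omega_{j_0}$. Once that bound holds on $\Omega_{j_0}$, monotonicity of the norms in $j$ transfers it to all larger $j$ at no extra cost.
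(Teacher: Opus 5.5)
Your proof is correct and follows the same route as the paper: monotone convergence of $\|e_\alpha\|_{p,\Omega_j}^p$ gives termwise convergence, a $j$-independent majorant $C\theta^{|\alpha|}$ comes from the argument of Lemma~\ref{lem:domination} on a fixed $\Omega_{j_0}\supset L$, and dominated convergence on $\mathbb Z^n$ finishes the proof. Your use of $\|e_\alpha\|_{p,\Omega_j}\ge\|e_\alpha\|_{p,\Omega_{j_0}}$ to transfer that bound to every $j\ge j_0$ and to $\Omega_\infty$ is exactly the detail the paper's sketch leaves implicit when it asserts a majorant ``independent of $j$''.
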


\begin{proof}
The union of an increasing family of Reinhardt domains is again Reinhardt. Let $\chi_j$ denote the characteristic function of $\Omega_j$. For a fixed monomial $e_\alpha$ and $p\ge 1$, the sequence $|e_\alpha|^p\chi_j$ increases pointwise to $|e_\alpha|^p\chi_\infty$, where $\chi_\infty$ is the characteristic function of $\Omega_\infty$. By the monotone convergence theorem,
\[
\|e_\alpha\|_{L^p(\Omega_j)}^p=\int_{\Omega_j}|e_\alpha|^p\to \int_{\Omega_\infty}|e_\alpha|^p=\|e_\alpha\|_{L^p(\Omega_\infty)}^p,
\]
with the convention that both sides may be infinite. Thus the coefficients of the $p$-MBK on $\Omega_j$ converge to those on $\Omega_\infty$.

Arguing as in Theorem~\ref{thm:continuity-p}, one obtains uniform domination of the summands on compact subsets of $\Omega_\infty$ by an $\ell^1$–summable majorant independent of $j$, using pseudoconvexity and the Bergman-type inequality on compact sets. An application of dominated convergence on the index set then yields uniform convergence of $K_p^{\Omega_j}$ to $K_p^{\Omega_\infty}$ on compact subsets of $\Omega_\infty\times\Omega_\infty$.
\end{proof}

\medskip

 \noindent\textbf{Acknowledgements.} The authors extend their gratitude to Luke Edholm for prompting the question regarding the stability of Monomial Basis Kernel and for his comments in improving the manuscript. The authors also thank  Kaushal Verma and  Debraj Chakrabarti for their comments and encouraging words. The first author is supported by postdoctoral fellowship at the TIFR Centre for Applicable Mathematics, Bangalore. The second author acknowledges the support from the DST INSPIRE FACULTY research grant DST/INSPIRE/04/2024/003868, and the Research Initiation Grant IITJ/R\&D/IGRC/2025-26/53 from IIT Jodhpur.







\bibliographystyle{alpha}  
\bibliography{references}

\end{document}